
\documentclass[final,3p]{elsarticle}
\usepackage{eurosym}
\usepackage{amsfonts}
\usepackage{amsmath}
\usepackage{amssymb}
\usepackage{graphicx}
\usepackage[tableposition=top]{caption}
\usepackage{subcaption}
\usepackage{float}
\usepackage{setspace}
\usepackage{placeins}
\usepackage{mathrsfs}

\setcounter{MaxMatrixCols}{10}

\newsavebox \foobox
\newlength{\foodim}

\graphicspath{{C:/Figures/}}
\newtheorem{theorem}{Theorem}
\newtheorem{proof}{Proof}

\newtheorem{example}{Example}

\newtheorem{lemma}{Lemma}

\newtheorem{remark}{Remark}

\usepackage{multirow}

\journal{a Q1 journal}

\begin{document}
\begin{frontmatter}
\title{A high-order predictor-corrector method for initial value problems with fractional derivative involving Mittag-Leffler kernel: epidemic model case study}
\author{Sami Aljhani$^{a}$, Mohd Salmi Md Noorani$^{a}$, Redouane Douaifia$^{b}$, Salem Abdelmalek$^{b,c}$}
\address{$(a)$ Department of Mathematical Sciences, Faculty of Science and Technology, Universiti Kebangsaan Malaysia 43600 Bangi, Selangor, Malaysia \\
$(b)$ Laboratory of Mathematics, Informatics and Systems (LAMIS), Larbi Tebessi University, Tebessa, Algeria\\
	$(c)$ Department of Mathematics and Computer Science, Larbi Tebessi University, Tebessa, Algeria\\
	
	Emails: sami.aljhani@gmail.com, redouane.douaifia@univ-tebessa.dz, salem.abdelmalek@univ-tebessa.dz}

\begin{abstract}
In this paper, we propose a numerical scheme of the predictor-corrector type for solving nonlinear fractional initial value problems, the chosen fractional derivative is called the Atangana-Baleanu derivative defined in Caputo sense (ABC). This proposed method is based on Lagrangian quadratic polynomials to approximate the nonlinearity implied in the Volterra integral which is obtained by reducing the given fractional differential equation via the properties of the ABC-fractional derivative. Through this technique, we get corrector formula with high accuracy which is implicit as well as predictor formula which is explicit and has the same precision order as the corrective formula. On the other hand, the so-called memory term is computed only once for both prediction and correction phases, which indicates the low cost of the proposed method. Also, the error bound of the proposed numerical scheme is offered.\\
Furthermore, numerical experiments are presented in order to assess the accuracy of the new method on two differential equations. Moreover, a case study is considered where the proposed predictor-corrector scheme is used to obtained approximate solutions of ABC-fractional generalized SI (susceptible-infectious) epidemic model for the purpose of analyzing dynamics of the suggested system as well as demonstrating the effectiveness of the new method to solve systems dealing with real-world problems.
\end{abstract}
\begin{keyword}
Fractional differential equations; Atangana-Baleanu derivative; Numerical method; Predictor-corrector; Epidemic model.
\end{keyword}
\end{frontmatter}

\section{Introduction}
Despite the fact that fractional calculus has a lengthy history in mathematics,  it has only recently seen a considerable number of real-world applications.  Fractional derivatives are important due to their attractive characteristics (e.g. memory effect), including their wide dynamical range \cite{alquran2018novel,douaifia2020asymptotic,aljhani2020numerical}.
Many definitions of fractional derivatives and integrals exist,  including Riemann-Liouville, Caputo, Grunwald-Letnikov \cite{magin2006fractional}. In 2015, Caputo and Fabrizio (CF) have suggested a new idea of fractional derivatives based on the exponential decay \cite{caputo2015new}.  Following that,  Atangana and Baleanu (AB) proposed a novel concept of fractional derivatives with non-singular and non-local kernel based on the Mittag-Leffler function \cite{atangana2016new}. 
Due to the difficulty or impossibility of obtaining to obtain exact solution of fractional differential equations, numerical techniques are required to provide approximate solutions.  Over the past few decades,  numerical methods to solve initial value problems have attracted great interest from the research community,
which support various disciplines,  including medicine, chemistry, biology, physics,  business,  and the arts, etc. Where computational algorithms are implemented. Moreover, numerical analysis techniques are the perfect tools to evaluate the behavior of real-life complicated models. Therefore, the implementation of accurate numerical methods is suibtle in the problems representing real world phenomena \cite{atangana2016new,aljhani2021numerical,Toufik2017,ullah2020modeling}.  Over 2000 years ago, the simplest method in the literature was introduced with the concept of interpolation. In contrast, several other techniques were introduced for nonlinear systems,  such as Newton’s method, Euler’s method, Gaussian elimination,  and Lagrange interpolation \cite{werner1984polynomial,dimitrov1994note,yang2016visualizing,krogh1970efficient}. Over the past few years, several numerical tools have been developed for solving fractional ordinary differential equations with general nonlinearity  \cite{diethelm1998fracpece,diethelm2004detailed,djida2017numerical,baleanu2018nonlinear,Nguyen2017,Lee2021,li2011numerical}. Such as the fractional Adams-Bashforth method, which is the most popular scheme in which the basis of Lagrange interpolation is used  \cite{jain2018numerical,zhang2018decoupled} and the fractional Adams-Bashforth/Moulton method developed with Newton linear/quadratic interpolations  \cite{atangana2021new, douaifia2021newton}. 
Baleanu et al.  \cite{baleanu2018nonlinear} used the definition of the Atangana-Baleanu derivative in the Caputo sense to produce a predictor-corrector approach for fractional differential equation as well as to prove the existence and uniqueness of such problem. They have converted the ABC-fractional ordinary differential equation into a Volterra Integral equation,  then they have used the rectangle rule for the predictor and the trapezoid rule for the corrector. 
On the other hand, based on the techniques and approximations appeared in \cite{Nguyen2017,Lee2021} for finding approximate solutions of fractional differential equations with Caputo and Caputo-Fabrizio fractional derivatives, respectively. We can to propose a new and efficient predictor-corrector method for solving ABC-fractional initial value problems by using quadratic lagrange interpolation, memory term, and second-order Taylor’s expansion.

The following is a breakdown of the paper's structure. Basic definitions and notations, including the Atangana-Baleanu fractional derivatives and integral, are introduced in section 2.  In section 3,  a new numerical method is proposed for solving fractional initial value problems involving the Atangana-Baleanu fractional derivative in Caputo sense. The schemes' error estimates are obtained in section 4. Finally, in section 5, we present numerical examples to demonstrate the efficacy of the proposed scheme,  and we also compare the solutions with other methods \cite{Toufik2017,baleanu2018nonlinear}. Also, the stability analysis of the ABC-fractional-order SI (Susceptible-Infection) epidemiological model and its comparison with the numerical results obtained via the proposed scheme, which investigates the significance of disease prevalence in the community, is discussed in the last section.

\section{Preliminaries}
The Atangana-Baleanu fractional derivative in the Caputo sense (ABC) is
defined as (cf. \cite{atangana2016new}):
\begin{equation}\label{fractional_D_AB}
{}^{ABC}_{\quad \,0}D^{\alpha}_{t}f(t)=\displaystyle \frac{AB(\alpha)}{1-\alpha}\int^{t}_{0}f^\prime(s)
\textbf{E}_{\alpha}\left(\frac{-\alpha}{1-\alpha}(t-s)^{\alpha}\right) ds,
\end{equation}
where $\alpha\in (0,1)$, $AB(\alpha)>0$ is a normalization function obeying
$AB(0)=AB(1)=1$ \\ 
$\left(\text{e.g. } AB(\alpha)=1-\alpha+\displaystyle \frac{\alpha}{\Gamma(\alpha)}\right) $, $ \textbf{E}_{\alpha}(.)$ denotes the Mittag-Leffler function of order $\alpha $ defined by
\begin{equation}
\begin{matrix}
\textbf{E}_{\alpha}(z)=\displaystyle\sum^{\infty}_{k=0}\frac{{z}^{k}}{\Gamma (\alpha k+1)}, & z,\alpha\in \mathbb{C},& \text{and }Re(\alpha)>0,
\end{matrix}
\end{equation}
and $\Gamma(.)$ denotes Gamma function, defined as
\begin{equation}
\begin{matrix}
\Gamma(\alpha)=\displaystyle\int^{+\infty}_{0}t^{\alpha-1} e^{-t} dt, &  \qquad Re(\alpha)>0.
\end{matrix}
\end{equation}
The fractional integral for the ABC, which is newly defined with a nonlocal kernel and does not have singularities at $t=s$, is
defined as follows (cf. \cite{atangana2016chaos}):

\begin{align}\label{fractional_I_AB}
&{}^{ABC}_{\quad \, 0}I^{\alpha}_{t}f(t)=\frac{1-\alpha}{AB(\alpha)}f(t)+\frac{\alpha}{AB(\alpha)\Gamma(\alpha)}\int^{t}_{0}f(s)(t-s)^{\alpha-1}ds.
\end{align}


We consider the initial-value problem with Atangana-Baleanu derivative 
\begin{equation}\label{Main_Systm}
\begin{cases}
&{}^{ABC}_{\quad \, 0}D^{\alpha}_{t} y\left(t\right)= f\left(t,y\left(t\right)\right),   \qquad   \qquad    \qquad       0< t< T < \infty, \\ \\
& \qquad  \quad \, y\left(0\right)= y_{0},   
\end{cases}
\end{equation}
where $f$ is a smooth nonlinear function that guarantees the existence of a unique solution for (\ref{Main_Systm}), with the fractional order $\alpha \in \left(0,1\right)$, and $y_{0}\in\mathbb{R}$. A continuous function $y(t)$ is the solution of (\ref{Main_Systm})  if and only if it is the solution of following Voltera-integral equation:
\begin{equation}\label{Sol_Main_Systm_Voltera_int}
\begin{matrix}
y(t) & =\displaystyle y_{0} + \frac{1 - \alpha}{AB(\alpha)} f (t,y(t)) + \frac{\alpha}{AB(\alpha)\Gamma(\alpha)} \int_{0}^{t} f\left(s,y(s)\right) \left(t - s\right)^{\alpha - 1} ds.
\end{matrix}
\end{equation}
At point $t_{n + 1}=h(n+1)$, $n=0,1,\dots,N\in \mathbb{N}$ with $h=\frac{T}{N}$, we get
\begin{equation}\label{Sol_MainS_Voltera_int_At_nplus1}
\begin{split}
y_{n + 1}  & = y_{0} + \frac{1 - \alpha}{AB(\alpha)} f_{n + 1} + \frac{\alpha}{AB(\alpha)\Gamma(\alpha)} \int_{0}^{t_{n+1}} f\left(s,y(s)\right) \left(t_{n + 1} - s\right)^{\alpha - 1} ds \\
& = y_{0} + \frac{1 - \alpha}{AB(\alpha)} f_{n + 1} + \frac{\alpha}{AB(\alpha)\Gamma(\alpha)} \left( \int_{0}^{t_{n}}f\left(s,y(s)\right) \left(t_{n + 1} - s\right)^{\alpha - 1} ds + \int_{t_{n}}^{t_{n+1}} f\left(s,y(s)\right) \left(t_{n + 1} - s\right)^{\alpha - 1} ds\right) 	\\	
&	= y_{0} + \frac{1 - \alpha}{AB(\alpha)}f_{n + 1} + y^{lag}_{n + 1}	+ y^{inc}_{n + 1},
\end{split}
\end{equation}
where, $y^{lag}_{n + 1}$ and $y^{inc}_{n + 1}$ are called lag term and increment term respectively, which take the following forms:
\begin{equation}\label{y_lag}
y^{lag}_{n + 1} :=\displaystyle \frac{\alpha}{AB(\alpha)\Gamma(\alpha)} \int_{0}^{t_{n}} (t_{n + 1} - s)^{\alpha - 1}f(s,y(s)) ds,
\end{equation}
and
\begin{equation}\label{y_inc}
y^{inc}_{n + 1}  :=\displaystyle \frac{\alpha}{AB(\alpha)\Gamma(\alpha)} \int_{t_n}^{t_{n+1}} (t_{n + 1} - s)^{\alpha - 1}f(s,y(s)) ds.
\end{equation}

\section{Predictor-corrector scheme with quadratic interpolation}
To start presenting the proposed numerical method to solve the initial-value problem (\ref{Main_Systm}), we need the following lemma: 
\begin{lemma} (\cite{Nguyen2017}) \label{Lemma_incrTerm}
	Assume that $\psi$ $\in$ $\mathbb{P}_{2}([0,T])$, where $\mathbb{P}_{2}([0,T]) $ is the space of all polynomials of degree less than or equal to two. Let $\psi_{k}$, $k = 0,....,N$ be the restricted value of $\psi(t)$ on $t_{k}$ ( $0 \leq k \leq N$ ). Then there exist reals $b^{0}_{n+1}, b^{1}_{n+1}$ and $b^{2}_{n+1}$, such that 
	\begin{equation}\label{Exactvalue_incrTermInt}
	\int_{t_{n}}^{t_{n+1}} (t_{n+1} - s)^{\alpha - 1} \psi (s) ds  = B \sum_{j=0}^{2} b^{j}_{n+1} \psi_{n+j-2}.
	\end{equation}
	Where, $B=\displaystyle\frac{h^{\alpha}}{\alpha(\alpha + 1)(\alpha + 2)}, b^{0}_{n +1} = \frac{\alpha + 4}{2}, b^{1}_{n + 1} = -2(\alpha + 3), b^{2}_{n + 1} = \frac{2\alpha^{2} + 9\alpha + 12}{2}$.
\end{lemma}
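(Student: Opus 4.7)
The statement is a quadrature identity: for any polynomial of degree at most two, the integral on $[t_n, t_{n+1}]$ of $\psi$ weighted against the singular kernel $(t_{n+1}-s)^{\alpha-1}$ equals \emph{exactly} a prescribed linear combination of the three preceding nodal values $\psi_{n-2},\psi_{n-1},\psi_n$. The natural plan is to write $\psi$ as its Lagrange interpolant through these three nodes, exchange the finite sum with the integral, and evaluate three elementary Beta-type integrals in closed form.

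First I would let $L_0, L_1, L_2$ denote the Lagrange basis polynomials attached to the nodes $t_{n-2}, t_{n-1}, t_n$. Unisolvence of quadratic interpolation forces the identity $\psi(s) = \sum_{j=0}^{2} \psi_{n+j-2} L_j(s)$ to hold \emph{pointwise} (not merely up to an interpolation error) for every $\psi \in \mathbb{P}_2([0,T])$. Substituting into the left-hand side of \eqref{Exactvalue_incrTermInt} and observing that the three coefficients $\psi_{n-2},\psi_{n-1},\psi_n$ are free reduces the task to proving the three scalar identities
\begin{equation*}
W_j \;:=\; \int_{t_n}^{t_{n+1}} (t_{n+1}-s)^{\alpha-1}\, L_j(s)\, ds \;=\; B\, b^j_{n+1}, \qquad j=0,1,2.
\end{equation*}

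To handle these, I would normalize with $s = t_n + h\xi$, $\xi\in[0,1]$. The kernel becomes $h^{\alpha-1}(1-\xi)^{\alpha-1}$, and because the mesh is uniform each $L_j(t_n+h\xi)$ is an explicit quadratic in $\xi$ with $n$-independent integer coefficients, namely $\xi(\xi+1)/2$, $-\xi(\xi+2)$, and $(\xi+1)(\xi+2)/2$. Every $W_j$ therefore decomposes into a linear combination of the three moment integrals
\begin{equation*}
I_k \;=\; \int_0^1 \xi^k (1-\xi)^{\alpha-1}\, d\xi \;=\; \frac{k!}{\alpha(\alpha+1)\cdots(\alpha+k)}, \qquad k=0,1,2,
\end{equation*}
multiplied by the universal factor $h^\alpha$.

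Finally, substituting the three values of $I_k$ and bringing each $W_j$ over the common denominator $\alpha(\alpha+1)(\alpha+2)$ automatically factors out the claimed prefactor $B = h^\alpha/[\alpha(\alpha+1)(\alpha+2)]$, leaving three polynomial numerators in $\alpha$ that must be matched against $(\alpha+4)/2$, $-2(\alpha+3)$, and $(2\alpha^2+9\alpha+12)/2$. Conceptually there is no obstacle — the identity is exact purely because a quadratic times a power admits a closed-form antiderivative — so the only real work is the bookkeeping of this last algebraic verification, which I expect to be the most error-prone (though wholly routine) part of the argument.
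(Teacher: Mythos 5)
Your argument is correct and complete: unisolvence makes the Lagrange representation exact on $\mathbb{P}_2$, the substitution $s=t_n+h\xi$ gives the basis polynomials $\xi(\xi+1)/2$, $-\xi(\xi+2)$, $(\xi+1)(\xi+2)/2$ as you state, and the Beta moments $I_k=k!/[\alpha(\alpha+1)\cdots(\alpha+k)]$ do reproduce exactly $B\,b^0_{n+1}=B(\alpha+4)/2$, $B\,b^1_{n+1}=-2B(\alpha+3)$, and $B\,b^2_{n+1}=B(2\alpha^2+9\alpha+12)/2$. The paper itself gives no proof of this lemma (it is quoted from the cited reference of Nguyen and Jang), and your derivation is the standard argument one would expect there, so nothing further is needed.
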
 

Now, we use quadratic Lagrange polynomial of $f(s,y(s))$ over the intervals $[t_{i-1} , t_{i+1}]$ , $1 \leq i \leq N-1$:
\begin{align}\label{Quad_interp_f}
f(s,y(s)) \approx \sum_{l=i-1}^{i+1}f_{l}q^{i}_{l}(s),
\end{align}
where
\begin{align}\label{Quad_interp_q}
q^{i}_{l}(s) = \prod_{\substack{k=i-1 \\ l\neq k}}^{i + 1} \frac{s - t_{k}}{t_{l} - t_{k}}.
\end{align}
But, on $[t_{0} , t_{1}]$ we can interpolate $f(s,y(s))$ with the points $\big\{{t_{0}, t_{\frac{1}{2}}, t_{1}}\big\}$, then we get 
\begin{align}\label{Quad_interp_f_tZeroOne}
f(s,y(s)) \approx f_{0} q^{0}_{0} (s) + f_{\frac{1}{2}} q^{0}_{\frac{1}{2}}(s) + f_{1}q^{0}_{1}(s),
\end{align}
where
\begin{align}\label{Quad_interp_q_tZeroOne}
q^{0}_{0}(s) =\displaystyle \frac{(s - t_{0})(s - t_{\frac{1}{2}})}{(t_{1} - t_{0})(t_{1} - t_{\frac{1}{2}})} , \ q^{0}_{\frac{1}{2}}(s) = \frac{(s - t_{0})(s - t_{1})}{(t_{\frac{1}{2}} - t_{1})(t_{\frac{1}{2}} - t_{1})} , \	q^{0}_{1}(s) = \frac{(s - t_{0})(s - t_{\frac{1}{2}})}{(t_{1} - t_{0})(t_{1} - t_{\frac{1}{2}})} .
\end{align}
The approximation of $y(t_{n+1})$ denoting by $\widetilde{y}_{n+1}$, according to (\ref{Quad_interp_f})-(\ref{Quad_interp_q_tZeroOne}), $\widetilde{y}_{n+1}$ takes the following form:
\begin{equation}\label{y_corrector}eq216
\widetilde{y}_{n+1} =\displaystyle y_{0} + {}^{AB} \widetilde{f}^{P}_{n+1} + \widetilde{y}^{lag}_{n+1} + \widetilde{y}^{inc}_{n+1},
\end{equation}
where
\begin{equation}\label{eq211}
{}^{AB} \widetilde{f}^{P}_{n+1}  =\displaystyle \frac{1 - \alpha}{AB(\alpha)} f(t_{n+1} , \widetilde{y}^{P}_{n+1}),
\end{equation}

\begin{equation}\label{eq212}
\widetilde{y}^{lag}_{n+1} =\displaystyle \frac{\alpha}{AB(\alpha)\Gamma(\alpha)} \left(\sum_{j=0}^{2} a^{j,0}_{n+1}          \widetilde{f}_\frac{j}{2} + \sum_{j=0}^{2}\sum_{k=1}^{n-1} a^{j,k}_{n+1}  \widetilde{f}_{k+j-1}\right)
\end{equation}
and
\begin{equation}\label{eq213}
\widetilde{y}^{inc}_{n+1}  =\displaystyle \frac{\alpha}{AB(\alpha)\Gamma(\alpha)} \left(\sum_{j=0}^{1} a^{j,n}_{n+1}   \widetilde{f}_{n+j-1} + a^{2,n}_{n+1}     \widetilde{f}^{P}_{n+1}\right)
\end{equation}
with
\begin{equation}\label{eq214}
\begin{matrix}
a^{j,k}_{n+1} =\displaystyle \int_{t_{k}}^{t_{k+1}}(t_{n+1} - s)^{\alpha-1} q^{k}_{\widehat{j}}(s) ds, &\textit{for}&  j \in \left\{0,1,2\right\},
\end{matrix}
\end{equation}
where,
\begin{equation}\label{eq215}
\widehat{j} = \begin{cases} 
\displaystyle \frac{j}{2} &\text{if } k=0, \\  
j+k-1 & \text{if } 1 \leq k \leq n.
\end{cases} 
\end{equation}

The predictor term can be approximated as follows: 
\begin{equation}\label{eq216}
\widetilde{y}^{P}_{n+1} = y_{0} + {}^{AB} \widetilde{f}^{apprx}_{n+1} + \widetilde{y}^{lag}_{n+1} + \frac{\alpha h^{\alpha}}{AB(\alpha)\Gamma(\alpha+3)}  \sum_{j=0}^{2} b^{j}_{n+1} \widetilde{f}_{n+j-2}  ,
\end{equation}
where 
\begin{equation}\label{eq217}
{}^{AB} \widetilde{f}^{apprx}_{n+1} = \frac{1-\alpha}{AB(\alpha)} \left(f_{n-2} -3 f_{n-1} -3 f_{n}\right) , \hspace{15pt}  n \geq 2.
\end{equation}

\section{Error analysis}
Throughout this section, we need the following lemmas: 
\begin{lemma} \label{lemma22}
	Let $f$ $\in$ $C^{n+1}  ([a,b]) $ and $P_{n}$ $\in$ $\mathbb{P}_{n}([a,b])$, (where $[a,b]\subset \mathbb{R}$ and $\mathbb{P}_{n}([a,b]) $ is the space of all polynomials of degree less than or equal to $n$)  interpolate the function $f$ at $t_{k}$, $ 0 \leq k \leq N$, with $t_{0} = a, t_{N} = b, $, then there exists $\xi$ $\in  (a,b) $ such that, for any $s$ $\in [a,b]$ 
	\begin{align}\label{eq218}
	f(s) - P_{n}(s) = \frac{f^{n+1}(\xi)}{(n+1)!} \prod_{k=0}^{n}(s-t_{k}).
	\end{align}
\end{lemma}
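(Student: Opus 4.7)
The statement is the classical polynomial interpolation error formula, so the plan is to use the standard Rolle's-theorem argument applied to a carefully chosen auxiliary function.

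First I would dispose of the trivial case: if $s$ coincides with one of the nodes $t_k$, then both sides of \eqref{eq218} vanish (the left side because $P_n$ interpolates $f$ at $t_k$, the right side because the factor $s-t_k$ appears in the product), so any $\xi\in(a,b)$ works. From here on I can assume $s\in[a,b]\setminus\{t_0,\dots,t_n\}$.

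Next I would introduce the auxiliary function
\begin{equation*}
g(x) \;=\; f(x) - P_n(x) - \lambda\,\omega(x), \qquad \omega(x):=\prod_{k=0}^{n}(x-t_k),
\end{equation*}
where the constant $\lambda$ is chosen precisely so that $g(s)=0$, namely
\begin{equation*}
\lambda \;=\; \frac{f(s)-P_n(s)}{\omega(s)}.
\end{equation*}
By construction $g$ vanishes at the $n+2$ distinct points $t_0,t_1,\dots,t_n,s$ in $[a,b]$, and $g\in C^{n+1}([a,b])$ because $f\in C^{n+1}$ and both $P_n$ and $\omega$ are polynomials.

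The heart of the proof is then a repeated application of Rolle's theorem. Between any two consecutive zeros of $g$ there is a zero of $g'$, producing at least $n+1$ zeros of $g'$; iterating, $g^{(k)}$ has at least $n+2-k$ zeros in $(a,b)$, so in particular $g^{(n+1)}$ has at least one zero, call it $\xi\in(a,b)$. Finally I would evaluate $g^{(n+1)}(\xi)$: since $\deg P_n\le n$ we have $P_n^{(n+1)}\equiv 0$, and since $\omega$ is a monic polynomial of degree exactly $n+1$ we have $\omega^{(n+1)}\equiv(n+1)!$. Thus
\begin{equation*}
0 \;=\; g^{(n+1)}(\xi) \;=\; f^{(n+1)}(\xi)-\lambda\,(n+1)!,
\end{equation*}
and solving for $\lambda$ and substituting back into $f(s)-P_n(s)=\lambda\,\omega(s)$ yields the claimed identity \eqref{eq218}.

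No step presents a serious obstacle — the whole argument is standard. The only point that deserves a little care is making sure the auxiliary function really has $n+2$ distinct zeros (which is why the case $s\in\{t_k\}$ is treated separately) and keeping track of the number of zeros at each differentiation, so that Rolle's theorem can be legitimately invoked $n+1$ times.
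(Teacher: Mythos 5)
Your proof is correct and is the standard Rolle's-theorem argument for the Lagrange interpolation remainder; the paper itself states this lemma without proof (it is a classical textbook fact), so there is nothing to compare against and your argument simply supplies the omitted details. The only point worth flagging is that the lemma as printed has a small notational slip ($N$ versus $n$ for the number of nodes); your proof implicitly and correctly reads it with $n+1$ distinct nodes $t_0,\dots,t_n$, which is the only way the formula can hold.
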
 
\begin{lemma} (\cite{Nguyen2017}) \label{lemma23}
	For $\delta>0$, we have
	\begin{equation}\label{eq219}
	\sum_{k=0}^{n-1}\int_{t_{k}}^{t_{k+1}}(t_{n+1} - s)^{\delta - 1} ds \leq \frac{T^{\delta}}{\delta}.
	\end{equation}
\end{lemma}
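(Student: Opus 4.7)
The plan is to recognize that this is essentially a single definite integral in disguise. The intervals $[t_k,t_{k+1}]$ for $k=0,1,\ldots,n-1$ form a partition of $[0,t_n]$, so the sum on the left telescopes into one integral, and from there the bound follows from a routine antiderivative computation together with $t_{n+1}\le T$.

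First I would collapse the sum by using additivity of the Riemann integral over the partition $0=t_0<t_1<\cdots<t_n$, writing
\begin{equation}
\sum_{k=0}^{n-1}\int_{t_{k}}^{t_{k+1}}(t_{n+1}-s)^{\delta-1}\,ds \;=\; \int_{0}^{t_{n}}(t_{n+1}-s)^{\delta-1}\,ds.
\end{equation}

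Next I would evaluate this single integral directly. With the substitution $u=t_{n+1}-s$ (so $du=-ds$), the limits transform $s=0\mapsto u=t_{n+1}$ and $s=t_n\mapsto u=t_{n+1}-t_n=h$, giving
\begin{equation}
\int_{0}^{t_{n}}(t_{n+1}-s)^{\delta-1}\,ds \;=\; \int_{h}^{t_{n+1}}u^{\delta-1}\,du \;=\; \frac{t_{n+1}^{\delta}-h^{\delta}}{\delta}.
\end{equation}
Finally, since $\delta>0$ forces $h^{\delta}\ge 0$, and since $t_{n+1}=(n+1)h\le T$ by the definition of the step size, one obtains $\frac{t_{n+1}^{\delta}-h^{\delta}}{\delta}\le\frac{T^{\delta}}{\delta}$, which is the desired estimate.

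Honestly, there is no serious obstacle here; the statement is an auxiliary inequality whose entire content lies in noticing the telescoping of the integrals. The only point worth double-checking is that the substitution is valid for all $\delta>0$ (including $0<\delta<1$, where the integrand is singular at $s=t_{n+1}$): this is fine because the singular endpoint $s=t_{n+1}$ is not in the integration range $[0,t_n]$, so the integrand remains bounded and the fundamental theorem of calculus applies without qualification.
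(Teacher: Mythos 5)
Your proof is correct and complete. Note that the paper itself offers no proof of this lemma: it is imported verbatim from \cite{Nguyen2017}, so there is no in-paper argument to compare against. Your route --- collapsing the telescoping sum into $\int_0^{t_n}(t_{n+1}-s)^{\delta-1}\,ds$, evaluating the antiderivative to get $\bigl(t_{n+1}^{\delta}-h^{\delta}\bigr)/\delta$, and dropping the nonnegative term $h^{\delta}$ before invoking $t_{n+1}\leq T$ --- is the standard and essentially unique way to establish the estimate, and your closing remark about why the singularity at $s=t_{n+1}$ causes no trouble (it lies outside $[0,t_n]$, at distance $h$ from the integration range) is exactly the right point to flag. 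The only caveat worth recording is that the final step requires $t_{n+1}=(n+1)h\leq T$, i.e.\ $n+1\leq N$; the paper's stated index range $n=0,1,\dots,N$ would formally allow $t_{N+1}=T+h$, but this is evidently a typo in the paper's setup rather than a gap in your argument, and for the relevant case $\delta=\alpha\in(0,1)$ the bound would survive even then by subadditivity of $x\mapsto x^{\delta}$.
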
 
\begin{lemma} (\cite{Dixon1986}) \label{lemma24} (Discrete Gronwall's Inequality) 
	Let $\left\{a_{n}\right\}^{N}_{n=0}  , \left\{b_{n}\right\}^{N}_{n=0}$ be non-negative sequences with second one is monotonic increasing and satisfy that 
	\begin{equation}\label{eq220}
	\begin{matrix}
	a_{n} \leq \displaystyle b_{n} + M h ^{\theta} \sum_{k=0}^{n-1}(n-k)^{\theta-1}a_{k}, & 0 \leq n \leq N ,
	\end{matrix}
	\end{equation}
	where, $M>0$ is independent of $h>0$, and $0<\theta\leq 1$. Then,
	\begin{equation}\label{eq221}
	a_{n} \leq b_{n} \textbf{E}_{\theta} \big( M \Gamma (\theta)(nh)^{\theta}\big).
	\end{equation}
\end{lemma}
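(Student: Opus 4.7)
The plan is to argue by strong induction on $n$. The base case $n=0$ is immediate: the sum in the hypothesis is empty and $\textbf{E}_{\theta}(0)=1$, so $a_{0}\le b_{0}=b_{0}\textbf{E}_{\theta}(0)$. For the inductive step, I assume the bound holds for every index less than $n$, substitute into the hypothesis, and use the monotonicity of $\{b_{k}\}$ (which gives $b_{k}\le b_{n}$ for $k\le n$) to obtain
\begin{equation*}
a_{n}\le b_{n}\left(1+Mh^{\theta}\sum_{k=0}^{n-1}(n-k)^{\theta-1}\textbf{E}_{\theta}\bigl(c(kh)^{\theta}\bigr)\right),\qquad c:=M\Gamma(\theta).
\end{equation*}
It therefore suffices to prove that the quantity in parentheses is bounded above by $\textbf{E}_{\theta}\bigl(c(nh)^{\theta}\bigr)$.

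The heart of the argument is to recognise the discrete sum as a left-endpoint Riemann sum for the integral $\int_{0}^{nh}(nh-s)^{\theta-1}\textbf{E}_{\theta}(cs^{\theta})\,ds$, using the identity $h^{\theta}(n-k)^{\theta-1}=h(nh-kh)^{\theta-1}$. Because $0<\theta\le 1$, the factor $(nh-s)^{\theta-1}$ is non-decreasing in $s$ on $[0,nh)$ (immediate from the substitution $u=nh-s$), and $\textbf{E}_{\theta}(cs^{\theta})$ is non-decreasing by virtue of its power series with non-negative coefficients, so their product is non-decreasing. The standard left-endpoint bound for a non-decreasing integrand then yields
\begin{equation*}
Mh^{\theta}\sum_{k=0}^{n-1}(n-k)^{\theta-1}\textbf{E}_{\theta}\bigl(c(kh)^{\theta}\bigr)\le M\int_{0}^{nh}(nh-s)^{\theta-1}\textbf{E}_{\theta}(cs^{\theta})\,ds.
\end{equation*}
I would then evaluate the right-hand integral in closed form by interchanging sum and integral (justified by the fact that $\textbf{E}_{\theta}$ is entire and hence its series converges absolutely and uniformly on $[0,nh]$) and applying the Beta identity $\int_{0}^{nh}(nh-s)^{\theta-1}s^{\theta j}\,ds=(nh)^{\theta(j+1)}\Gamma(\theta)\Gamma(\theta j+1)/\Gamma(\theta(j+1)+1)$ term by term. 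After cancellation and the re-indexing $j\mapsto j-1$, the integral collapses to $(1/M)\bigl(\textbf{E}_{\theta}(c(nh)^{\theta})-1\bigr)$, so adding $1$ produces exactly $\textbf{E}_{\theta}(c(nh)^{\theta})$, closing the induction.

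The main technical obstacle is the Riemann-sum inequality itself, which requires both factors of the integrand to be monotone in the same direction; this is precisely where the hypothesis $0<\theta\le 1$ is used essentially, since for $\theta>1$ the weight $(nh-s)^{\theta-1}$ would be non-increasing while $\textbf{E}_{\theta}(cs^{\theta})$ is still non-decreasing, and the simple monotone Riemann-sum comparison would fail. A secondary care-point is the mild endpoint singularity of $(nh-s)^{\theta-1}$ at $s=nh$ when $\theta<1$; this is harmless because the left-endpoint sum never evaluates the integrand there and the integral is convergent for $\theta>0$. Beyond these observations the term-by-term integration is routine.
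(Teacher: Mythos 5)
Your proof is correct, but there is nothing in the paper to compare it against: the authors state this lemma as a quoted result from Dixon and McKee \cite{Dixon1986} and give no proof of it. Taken on its own merits, your induction is sound. The base case is immediate; non-negativity of the $a_{k}$ together with positivity of the weights $(n-k)^{\theta-1}$ licenses substituting the inductive bound into the convolution sum, and monotonicity of $\{b_{n}\}$ lets you pull out $b_{n}$. The key step is exactly as you describe: the identity $h^{\theta}(n-k)^{\theta-1}=h\,(nh-kh)^{\theta-1}$ exhibits $Mh^{\theta}\sum_{k=0}^{n-1}(n-k)^{\theta-1}\textbf{E}_{\theta}\bigl(c(kh)^{\theta}\bigr)$ as $M$ times a left-endpoint Riemann sum of $(nh-s)^{\theta-1}\textbf{E}_{\theta}(cs^{\theta})$, both factors of which are non-decreasing on $[0,nh)$ precisely because $0<\theta\leq 1$, so the sum is dominated by the (convergent, despite the endpoint singularity) integral; the term-by-term Beta evaluation then gives $M\int_{0}^{nh}(nh-s)^{\theta-1}\textbf{E}_{\theta}(cs^{\theta})\,ds=\textbf{E}_{\theta}\bigl(c(nh)^{\theta}\bigr)-1$, which closes the induction. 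This is in essence the classical argument for the weakly singular discrete Gronwall inequality (induction combined with comparison of the discrete convolution weights against the fractional integral of the Mittag-Leffler function), so your route coincides in spirit with the cited source; the only point worth making explicit is the role of $a_{k}\geq 0$ in the substitution step.
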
 

\begin{lemma} (\cite{Nguyen2017}) \label{lemma25}
	There exist $K_{1} , K_{2} >0$ such that for $\alpha \in (0,1)$, and $j=0,1,2$, we have
	\begin{equation}\label{eq222}
	\Bigl| a^{j,k}_{n+1}\Bigl|   \leq \begin{cases} 
	\displaystyle K_{1}(n-k)^{\alpha-1} h^{\alpha} &\text{if  } 0 \leq k \leq n-1, \\  
	K_{2} & \text{if  }  k = n.
	\end{cases} 
	\end{equation}
\end{lemma}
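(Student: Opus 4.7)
The plan is to factor the estimate for $|a^{j,k}_{n+1}|$ into two pieces: a uniform bound on the Lagrange basis polynomial $q^k_{\widehat{j}}$ on the interval of integration $[t_k,t_{k+1}]$, and an explicit bound on the weight integral $\int_{t_k}^{t_{k+1}}(t_{n+1}-s)^{\alpha-1}\,ds$. Both factors behave well because the mesh is uniform and the interpolation stencils have a translation-invariant pattern.

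First I would establish that the Lagrange polynomials are bounded by a constant independent of $k$, $n$, and $h$. Setting $s=(k+\tau)h$ for $\tau\in[0,1]$ in
\[
q^k_l(s)=\prod_{m\ne l}\frac{s-t_m}{t_l-t_m},
\]
where $m$ ranges over the three interpolation nodes (either $\{k-1,k,k+1\}$ for $1\le k\le n$, or $\{0,\tfrac{1}{2},1\}$ for $k=0$), every factor of $h$ cancels between numerator and denominator, so $q^k_l$ reduces to a fixed polynomial in $\tau$. Its maximum on $[0,1]$ is therefore a constant $C>0$ independent of $k$, $n$, $h$; this gives $|q^k_{\widehat{j}}(s)|\le C$ on $[t_k,t_{k+1}]$ for every admissible $j,k$.

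Then I would evaluate the weight integral directly:
\[
\int_{t_k}^{t_{k+1}}(t_{n+1}-s)^{\alpha-1}\,ds=\frac{h^\alpha}{\alpha}\bigl[(n-k+1)^\alpha-(n-k)^\alpha\bigr].
\]
For $0\le k\le n-1$, I apply the mean value theorem to $x\mapsto x^\alpha$ to get $(n-k+1)^\alpha-(n-k)^\alpha=\alpha\xi^{\alpha-1}$ for some $\xi\in(n-k,n-k+1)$. Since $\alpha-1<0$, the map $\xi\mapsto\xi^{\alpha-1}$ is decreasing, so $\xi^{\alpha-1}\le(n-k)^{\alpha-1}$. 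Combining with the polynomial bound yields $|a^{j,k}_{n+1}|\le C h^\alpha (n-k)^{\alpha-1}$, and I can take $K_1:=C$. For the terminal case $k=n$, the integral simplifies to $h^\alpha/\alpha\le T^\alpha/\alpha$, yielding $|a^{j,n}_{n+1}|\le C T^\alpha/\alpha=:K_2$.

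The only subtle point is the monotonicity step $\xi^{\alpha-1}\le(n-k)^{\alpha-1}$, which depends crucially on $\alpha<1$; for $\alpha\ge 1$ the inequality would reverse and a different estimate would be required. Everything else—uniform boundedness of polynomials on a compact reference interval obtained by rescaling, a single antiderivative computation, and the bookkeeping that absorbs $h^\alpha$ into $T^\alpha$ in the terminal case—is routine.
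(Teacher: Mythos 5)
Your proof is correct. The paper does not prove this lemma itself---it is imported from Nguyen and Jang \cite{Nguyen2017}---and your argument (uniform boundedness of the rescaled Lagrange basis on the reference interval after the substitution $s=t_k+\tau h$, an exact antiderivative for the weight $\int_{t_k}^{t_{k+1}}(t_{n+1}-s)^{\alpha-1}\,ds$, and the mean-value/monotonicity step that uses $\alpha<1$, with the $k=n$ case split off because $(n-k)^{\alpha-1}$ degenerates there) is precisely the standard proof of this estimate.
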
 
Let $T^{P}_{n+1}$ be the truncation error of prediction at point $t_{n+1}$, defined by
\begin{align}\label{eq224}
T^{P}_{n+1} = & \Biggl| \frac{\alpha}{AB(\alpha)\Gamma(\alpha)} \int_{0}^{t_{n+1}} (t_{n+1} - s)^{\alpha - 1} f (s,y(s))ds - \frac{\alpha}{AB(\alpha)\Gamma(\alpha)} \sum_{j=0}^{2} a^{j,0}_{n+1} f_\frac{j}{2} \\  \nonumber & -\frac{\alpha}{AB(\alpha)\Gamma(\alpha)} \sum_{j=0}^{2} \sum_{k=1}^{n-1} a^{j,k}_{n+1} f_{k+j-1} - \frac{\alpha h^{\alpha}}{AB(\alpha)\Gamma (\alpha + 3)} \sum_{j=0}^{2} b^{j}_{n+1} f_{n+j-2} \Biggl|.
\end{align}
\begin{theorem} \label{theorem21}
	Assume that $f(.,y(.)) \in  C^{3} ([0,T]). $ Then, there exists $C>0$ (independent of all grid parameters) such that:
	\begin{align}\label{eq225}
	T^{P}_{n+1} \leq C h^{3}.
	\end{align}
\end{theorem}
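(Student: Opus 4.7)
The plan is to decompose the integral in \eqref{eq224} according to the same three blocks that define the predictor quadrature: the starting block $[t_0,t_1]$ (with Lagrange nodes $\{t_0,t_{1/2},t_1\}$), the intermediate strip $\bigcup_{k=1}^{n-1}[t_k,t_{k+1}]$ (with nodes $\{t_{k-1},t_k,t_{k+1}\}$ on the $k$-th subinterval), and the terminal block $[t_n,t_{n+1}]$ (with the extrapolation nodes $\{t_{n-2},t_{n-1},t_n\}$ supplied by Lemma~\ref{Lemma_incrTerm}). On each block the corresponding quadrature weight (the $a^{j,k}_{n+1}$ on the first two blocks, the $b^j_{n+1}$ combination on the last) is, by construction, the integral of $(t_{n+1}-s)^{\alpha-1}$ against the appropriate quadratic Lagrange basis polynomial, so on each piece the block-wise error collapses to $\int(t_{n+1}-s)^{\alpha-1}\bigl[f(s,y(s))-P_2(s)\bigr]\,ds$, where $P_2$ is the local quadratic interpolant of $f(\cdot,y(\cdot))$.

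I would then apply Lemma~\ref{lemma22} with $n=2$ on every block: since $f(\cdot,y(\cdot))\in C^3([0,T])$ by hypothesis, the local interpolation error takes the form $\tfrac{1}{6}f^{(3)}(\xi)\prod_{i=0}^{2}(s-\tau_i)$ for some $\xi$ in the relevant interval, and $M:=\|f^{(3)}\|_\infty$ is finite. The next step is to bound the nodal product $|(s-\tau_0)(s-\tau_1)(s-\tau_2)|$ on each block uniformly by a constant multiple of $h^3$: on $[t_0,t_1]$ and on each intermediate block this is immediate because both the integration interval and its three interpolation nodes sit inside a window of width $2h$; on the terminal block the nodes $t_{n-2},t_{n-1},t_n$ lie strictly to the left of $[t_n,t_{n+1}]$, but for $s\in[t_n,t_{n+1}]$ one still has $|s-t_{n-j}|\le(j+1)h$ for $j=0,1,2$, giving the uniform bound $6h^3$.

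Finally, I would collect the pieces to arrive at an estimate of the shape
\begin{align*}
T^P_{n+1}\le \frac{\alpha M\,h^3}{AB(\alpha)\Gamma(\alpha)}\left[C_1\sum_{k=0}^{n-1}\int_{t_k}^{t_{k+1}}(t_{n+1}-s)^{\alpha-1}\,ds + C_2\int_{t_n}^{t_{n+1}}(t_{n+1}-s)^{\alpha-1}\,ds\right],
\end{align*}
bound the first bracketed sum by $T^{\alpha}/\alpha$ using Lemma~\ref{lemma23} and the last integral exactly by $h^{\alpha}/\alpha\le T^{\alpha}/\alpha$, and absorb the residual $\alpha$-dependent factors $AB(\alpha),\Gamma(\alpha),T^{\alpha}$ into a single constant $C$ independent of $n$ and $h$. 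The main obstacle I anticipate is not analytic but geometric: verifying that the terminal block, where the Lagrange construction is an \emph{extrapolation} rather than an interpolation, still produces an $O(h^3)$ pointwise error with a constant uniform in $n$. Once that bound is secured, Lemma~\ref{lemma23} ensures that summing over $k$ does not degrade the $h^3$ rate, and the theorem follows.
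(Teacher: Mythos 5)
Your proof is correct and its skeleton is the paper's: the same three-block splitting $T^P_{n+1}\le\mathcal{I}_1+\mathcal{I}_2+\mathcal{I}_3$, the same use of Lemma~\ref{lemma22} for the local $O(h^3)$ interpolation error on $[t_0,t_1]$ and on the intermediate strips, and the same appeal to Lemma~\ref{lemma23} to sum the weights $\int_{t_k}^{t_{k+1}}(t_{n+1}-s)^{\alpha-1}ds$ without degrading the rate. Where you genuinely diverge is the terminal block. The paper Taylor-expands $f$ about $t_n$, invokes the exactness of the $b^j_{n+1}$-rule on $\mathbb{P}_2$ (Lemma~\ref{Lemma_incrTerm}) to replace $\int_{t_n}^{t_{n+1}}(t_{n+1}-s)^{\alpha-1}P_2(s)\,ds$ by $B\sum_j b^j_{n+1}P_2(t_{n+j-2})$, and then bounds the nodal mismatches $|P_2(t_{n+j-2})-f_{n+j-2}|$ by Taylor remainders ($\tfrac{4M}{3}h^3$ and $\tfrac{M}{6}h^3$). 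You instead note that $Bb^j_{n+1}$ equals the weighted integral of the Lagrange basis polynomial on $\{t_{n-2},t_{n-1},t_n\}$ (apply Lemma~\ref{Lemma_incrTerm} to $\psi=L_j$), so the terminal error collapses to a single weighted integral of $f-Q_2$ with $Q_2$ the Lagrange \emph{extrapolant}, whose nodal product you bound by $6h^3$ on $[t_n,t_{n+1}]$. This is cleaner and avoids the Taylor bookkeeping, but it leans on the error formula of Lemma~\ref{lemma22} at points $s$ outside the hull of the nodes, which the lemma as stated does not literally cover; the extension is standard (the Rolle-type argument runs on the smallest interval containing $s$ and the nodes, here $[t_{n-2},t_{n+1}]$, with $\xi$ located there), and since you correctly single this out as the crux, you should state and use that extended version explicitly. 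With that remark supplied, your assembly via Lemma~\ref{lemma23} closes the argument and yields the same kind of constant, proportional to $M T^{\alpha}/\bigl(AB(\alpha)\Gamma(\alpha+1)\bigr)$, as the paper's proof.
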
 
\begin{proof}
	We set the notation, $AB_{g}=\displaystyle \frac{\alpha}{AB(\alpha)\Gamma(\alpha)}$, then 
	\begin{equation}\label{eq226}
	T^{P}_{n+1} \leq \displaystyle \sum_{j=1}^{3} \mathcal{I}_{j},
	\end{equation}
	where,
	\begin{align}
	\mathcal{I}_{1} &:= AB_{g} \int_{0}^{t_{1}} (t_{n+1} - s)^{\alpha-1} \Bigl|  f(s,y(s)) - \sum_{j=0}^{2} f_{\frac{j}{2}} q^{0}_{\frac{j}{2}}(s) \Bigl|  ds, \\  
	\mathcal{I}_{2} &:= AB_{g} \sum_{k=1}^{n-1} \int_{t_{k}}^{t_{k+1}} (t_{n+1} - s)^{\alpha-1} \Bigl|  f(s,y(s)) - \sum_{j=0}^{2} f_{k+j-1} q^{k}_{k+j-1} (s) \Bigl|  ds, \\ 
	\mathcal{I}_{3} &:= \Bigl|  AB_{g} \int_{t_{n}}^{t_{n+1}}(t_{n+1} -s)^{\alpha-1} f(s,y(s)) - \Gamma(\alpha)AB_{g} \frac{h^{\alpha}}{\Gamma(\alpha + 3)} \sum_{j=0}^{2} b^{j}_{n+1}f_{n+j-2} \Bigl|.\label{mathcal_I_three}
	\end{align}
	Thanks to lemma \ref{lemma22}, there exists $C_{1}>0$, such that
	\begin{equation}\label{est_I_1_2}
	\begin{matrix}
	\mathcal{I}_{j} \leq C_{1}h^{3}, & j=1,2.
	\end{matrix}
	\end{equation}
	The Taylor expansion of $f$ around $t_{n}$ gives:
	\begin{equation}\label{eq227}
	\begin{matrix}
	f(t) = \displaystyle P_{2}(t) + \frac{1}{3!}f^{\prime\prime\prime} (\xi_{n})(t-t_{n})^{3} + O(h^{3}), & \xi_{n} \in (t_{n} , t),
	\end{matrix}
	\end{equation}
	where,
	\begin{equation}
	\begin{matrix}
	P_{2}(t) =\displaystyle f_{n} + f^{\prime}(t_{n})(t-t_{n}) + \frac{1}{2} f^{\prime\prime} (t_{n}) (t-t_{n})^{2}, & (P_{2} \in \mathbb{P}_{2}([0,T])).
	\end{matrix}
	\end{equation}
	According to lemma \ref{lemma22} and (\ref{mathcal_I_three}), we have
	\begin{align}\label{eq228}
	\mathcal{I}_{3} =&  \Bigl|  AB_{g} \int_{t_{n}}^{t_{n+1}} (t_{n+1} - s)^{\alpha-1}(P_{2}(s) + \frac{1}{3!}f^{\prime} (\xi_{n})(s - t_{n})^{3} + O(h^{3})) ds \notag\\  
	&- \Gamma(\alpha)AB_{g} \frac{h^{\alpha}}{\Gamma(\alpha + 3)} \sum_{j=0}^{2} b^{j}_{n+1}f_{n+j-2}  \Bigl| ,
	\end{align}
	it follows that
	\begin{align}\label{eq229}
	\mathcal{I}_{3}  &\leq  \Gamma (\alpha) AB_{g} \frac{h^{\alpha}}{\Gamma(\alpha+3)}\Big(\mid b^{0}_{n+1} \mid \mid P_{2}(t_{n-2}) - f_{n-2} -f_{n-2}\mid + \mid b^{1}_{n+1} \mid \mid P_{2}(t_{n-1}) - f_{n-1} \mid\Big) \notag \\  
	& \ \ \ +  AB_{g} \left|  \int_{t_{n}}^{t_{n+1}}(t_{n+1}-s)^{\alpha -1}\Big(\frac{1}{6}f^{\prime\prime\prime}(\xi_{n}){(s-t_{n})^{3}}\Big)ds\right|  + O(h^{3}) .
	\end{align}
	According to  (\ref{eq226}) and  (\ref{eq227}), we have
	\begin{equation}\label{eq230}
	\begin{matrix}
	\Bigl|  P_{2}(t_{n-2}) - f_{n-2}  \Bigl|  \leq \displaystyle \frac{4 M h^{3}}{3}, & \text{where } M :=\max\left\lbrace  \mid f^{\prime\prime\prime}(\xi_{k}) \mid \text{ : } 0 \leq k \leq N\right\rbrace,
	\end{matrix}
	\end{equation} 
	and
	\begin{equation}\label{eq231}
	\Bigl| P_{2}(t_{n-1}) - f_{n-1} 	 \Bigl|   \leq \frac{M}{6}h^{3}.
	\end{equation}
	We thus obtain the estimate
	\begin{equation}\label{eq232}
	\mathcal{I}_{3} \leq \Gamma (\alpha) AB_{g} \frac{h^{\alpha}}{\Gamma(\alpha + 3)} \Big(  b^{0}_{n+1}  \frac{4 M h^{3}}{3} + \left|  b^{1}_{n+1}\right| \frac{M}{6}h^{3}\Big)  +  \frac{M h^{3+\alpha}}{6AB(\alpha)\Gamma(\alpha)} + O(h^{3}).
	\end{equation}
	Consequently, there exists $C_{2}>0$ such that
	\begin{equation}\label{eq233}
	\mathcal{I}_{3}\leq C_{2}h^{3}.
	\end{equation}
	According to (\ref{eq226}), (\ref{est_I_1_2}) and (\ref{eq233}), proof of theorem is achieved.
\end{proof}

\begin{theorem}\label{theorem22} (Global Predictor Error) 
	Assume that $f(.,y(.)) \in C^{3} ([0,T])$ and is Lipschitz continuous in its second argument, i.e.
	\begin{equation}\label{eq234}
	\begin{matrix}
	\exists L > 0, \textit{ such that }\left|  f(t,y_{1}) - f(t,y_{2}) \right|  \leq L \left|  y_{1} - y_{2} \right|,  & \forall y_{1} , y_{2} \in \mathbb{R}.
	\end{matrix}
	\end{equation}
	Then, there exist $K_{1},K_{2}>0$ such that, the global predictor error satisfies
	\begin{equation}\label{eq235}
	E^{P}_{n+1} := \left|  y_{n+1} - \widetilde{y}^{P}_{n+1} \right| \leq T^{P}_{n+1} + K_{1}  AB_{g}L h^{\alpha} E_{\frac{1}{2}} +  K_{2}  AB_{g}Lh^{\alpha} \sum_{k=1}^{n}(n-k+1)^{\alpha-1}E_{k} + O(h^{3}).
	\end{equation}
	Where, $AB_{g}:=\displaystyle \frac{\alpha}{AB(\alpha)\Gamma(\alpha)}$.
\end{theorem}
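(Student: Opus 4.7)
The plan is to compare $y_{n+1}$ (given by the Volterra representation \eqref{Sol_MainS_Voltera_int_At_nplus1}) with $\widetilde{y}^{P}_{n+1}$ (given by \eqref{eq216}), add and subtract an intermediate quantity in which the discrete predictor formula is evaluated at the \emph{exact} grid values $f_k = f(t_k,y(t_k))$ rather than at the computed approximations $\widetilde{f}_k = f(t_k,\widetilde{y}_k)$. The difference between the exact solution and that intermediate quantity is precisely the truncation error $T^{P}_{n+1}$ defined in \eqref{eq224} (after also accounting for the replacement of $\frac{1-\alpha}{AB(\alpha)} f_{n+1}$ by ${}^{AB}\widetilde{f}^{apprx}_{n+1}$, whose discrepancy will be shown to be $O(h^{3})$ via Taylor expansion around $t_{n}$, mirroring the argument used for $\mathcal{I}_{3}$ in Theorem~\ref{theorem21}). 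This gives the first two contributions, $T^{P}_{n+1} + O(h^{3})$, to the bound on $E^{P}_{n+1}$.

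Next I would bound the difference between the intermediate quantity and the actual predictor $\widetilde{y}^{P}_{n+1}$. By \eqref{eq212} and the increment portion of \eqref{eq216}, this difference is a linear combination of terms $a^{j,k}_{n+1}(f_{\widehat{j}} - \widetilde{f}_{\widehat{j}})$ for $k=0,\dots,n-1$, together with a contribution from the extrapolated increment term with weights $b^{j}_{n+1}$ at $k=n$. The Lipschitz hypothesis \eqref{eq234} converts each $|f_{\widehat{j}} - \widetilde{f}_{\widehat{j}}|$ into $L \cdot E_{\widehat{j}}$ (and similarly $L \cdot E_{1/2}$ for the half-step node in the first sub-interval). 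I would then invoke Lemma~\ref{lemma25} to bound the weights: the term at $k=0$ containing $\widetilde{f}_{1/2}$ produces the contribution $K_{1} AB_{g} L h^{\alpha} E_{1/2}$, while the terms for $1 \leq k \leq n-1$ yield $AB_g L h^{\alpha} \sum_{k=1}^{n-1} K_1 (n-k)^{\alpha-1} E_{k+j-1}$ after relabelling of indices.

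The remaining piece is the increment contribution at $k=n$, where the weights $b^{j}_{n+1}$ from Lemma~\ref{Lemma_incrTerm} multiply $\widetilde{f}_{n-2}, \widetilde{f}_{n-1}, \widetilde{f}_{n}$; since these weights and the prefactor $\alpha h^{\alpha} / (AB(\alpha) \Gamma(\alpha+3))$ are $O(AB_{g} h^{\alpha})$, Lipschitz continuity produces a term of size $AB_g L h^\alpha (E_{n-2}+E_{n-1}+E_{n})$ that I would absorb into the generic sum $K_{2} AB_{g} L h^{\alpha} \sum_{k=1}^{n}(n-k+1)^{\alpha-1}E_{k}$ by enlarging the constant $K_2$ and observing $(n-k+1)^{\alpha-1} \geq 1$ for $k=n$. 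After re-indexing and consolidating, the combined estimate takes exactly the form \eqref{eq235}.

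The main obstacle is bookkeeping, not analysis: correctly matching the indices $\widehat{j}$ from \eqref{eq215} to the shifted sums that appear when one writes $|f_{\widehat{j}} - \widetilde{f}_{\widehat{j}}| \leq L E_{\widehat{j}}$, and ensuring that the extrapolated increment contribution (which uses values at $t_{n-2},t_{n-1},t_{n}$ to represent an integral over $[t_n,t_{n+1}]$) is folded into the same Gronwall-ready sum with a kernel of the form $(n-k+1)^{\alpha-1}$. Once the indices are aligned, the proof reduces to a triangle inequality plus the weight bounds from Lemma~\ref{lemma25} and a Taylor-remainder estimate for the ${}^{AB}\widetilde{f}^{apprx}_{n+1}$ term; no new analytic tool beyond what is already assembled in Section~4 is needed. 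Lemma~\ref{lemma24} is not invoked here but is clearly set up for the subsequent global corrector estimate that will convert the recursive inequality into an absolute bound on $E^{P}_{n+1}$.
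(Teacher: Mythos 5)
Your plan follows essentially the same route as the paper's proof: insert the predictor formula evaluated at the exact grid values $f_k$, recognize the truncation error $T^{P}_{n+1}$, convert each $|f_k-\widetilde f_k|$ into $L E_k$ via the Lipschitz hypothesis, bound the weights $a^{j,k}_{n+1}$ using Lemma~\ref{lemma25}, and consolidate the index-shifted sums into a single kernel $(n-k+1)^{\alpha-1}$ exactly as the paper does with its inequality $(n+1-i)^{\alpha-1}+(n-1)^{\alpha-1}+(n-i-1)^{\alpha-1}\leq 6(n+1-i)^{\alpha-1}$. The one small correction is that $\bigl|{}^{AB}f_{n+1}-{}^{AB}\widetilde f^{\,apprx}_{n+1}\bigr|$ is not purely an $O(h^{3})$ Taylor remainder, since $\widetilde f^{\,apprx}_{n+1}$ is built from the computed values $\widetilde f_{n+j-2}$ rather than the exact ones; the paper accordingly retains a Lipschitz-propagated contribution $C_{1}\sum_{j=0}^{2}E_{n+j-2}$ before folding it into the final sum, and with that adjustment your argument coincides with the paper's.
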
 
\begin{proof}
	We set $^{AB}{f}_{n+1} := \displaystyle\frac{1-\alpha}{AB(\alpha)} f(t_{n+1},y_{n+1})$. Thus 
	\begin{equation}\label{eq236}
	E^{P}_{n+1} =  \left| {}^{AB}{f}_{n+1} + AB_{g} \int_{0}^{n+1}(t_{n+1} - s)^{\alpha - 1}f(s,y(s))ds - {}^{AB} \widetilde{f}^{apprx}_{n+1} - \widetilde{y}^{lag}_{n+1} -\displaystyle\frac{\Gamma(\alpha)AB_{g}h^{\alpha}}{\Gamma(\alpha+3)}\sum_{j=0}^{2}b^{j}_{n+1}\widetilde{f}_{n+j-2}  \right|,
	\end{equation}
	therefore
	\begin{align}\label{eq237}
	E^{P}_{n+1}  =  \Biggl| & {} ^{AB}{f}_{n+1} + AB_{g} \int_{0}^{t_{n+1}} (t_{n+1}- s)^{\alpha-1} f(s,y(s))ds - AB_{g} \sum_{j=0}^{2} a^{j,0}_{n+1} f_{\frac{j}{2}} \nonumber \\ \nonumber & -AB_{g}\sum_{j=0}^{2}\sum_{k=1}^{n - 1} a^{j,k}_{n+1}f_{k+j-1} - AB_{g}h^{\alpha} \sum_{j=0}^{2}b^{j}_{n+1} f_{n+j-2} + AB_{g} \sum_{j=0}^{2} a^{j,0}_{n+1}f_{\frac{j}{2}} \\ \nonumber & + AB_{g}\sum_{j=0}^{2}\sum_{k=1}^{n - 1} a^{j,k}_{n+1}f_{k+j-1} - \displaystyle\frac{\Gamma(\alpha)AB_{g}h^{\alpha}}{\Gamma(\alpha+3)} \sum_{j=0}^{2}b^{j}_{n+1} f_{n+j-2} -  {}^{AB} \widetilde{f}^{apprx}_{n+1} - \widetilde{y}^{lag}_{n+1} \\  & - \displaystyle\frac{\Gamma(\alpha)AB_{g}h^{\alpha}}{\Gamma(\alpha+3)} \sum_{j=0}^{2} b^{j}_{n+1} \widetilde{f}_{n+j-2}  \Biggl| ,
	\end{align}
	it follows that
	\begin{align}\label{eq238}
	E^{P}_{n+1} & \leq T^{P}_{n+1} + \left|  {}^{AB}{f}_{n+1} - {}^{AB}{\widetilde{f}}_{n+1}^{apprx} \right|  \\ \nonumber
	& \ \ \ + AB_{g} \sum_{j=0}^{2} \left|  a^{j,0}_{n+1}  \right|  \left|  f_{\frac{j}{2}} - \widetilde{f}_{\frac{j}{2}} \right|   + AB_{g} \sum_{j=0}^{2}\sum_{k=1}^{n-1} \left|  a^{j,k}_{n+1} \right|  \left|  f_{k+j-1} - \widetilde{f}_{k+j-1} \right|    \\ \nonumber 
	& \ \ \ + \frac{\Gamma(\alpha)AB_{g}h^{\alpha}}{\Gamma(\alpha + 3)}  \sum_{j=0}^{2} \left|  b^{j}_{n+1} \right|  \left|  f_{n+j-2} - \widetilde{f}_{n+j-2} \right| .
	\end{align}
	Hence, according to the lemma \ref{lemma23} there exist $C_{1},K_{1}>0$, such that
	\begin{align}\label{eq239}
	E^{P}_{n+1} & \leq  T^{P}_{n+1} + C_{1} \sum_{j=0}^{2} E_{n+j-2} + K_{1}  AB_{g}Lh^{\alpha} n^{\alpha-1}(E_{\frac{1}{2}} + E_{1})  \notag\\  
	& \ \ \ +  K_{1}AB_{g}Lh^{\alpha} \sum_{j=0}^{2}\sum_{k=1}^{n-1}(n-k)^{\alpha - 1} E_{k+j-1} + \frac{\Gamma(\alpha)AB_{g}Lh^{\alpha}}{\Gamma(\alpha+3)}\sum_{j=0}^{2} \left|  b^{j}_{n+1} \right|   E_{n+j-2} + O(h^{3}).
	\end{align}
	On the other hand, for $\alpha\in (0,1)$ we have
	\begin{equation}
	\begin{matrix}
	(n+1-i)^{\alpha-1} + (n-1)^{\alpha-1} + (n-i-1)^{\alpha-1} \leq 6(n+1-i)^{\alpha-1},  & 1 \leq i \leq n-2.
	\end{matrix}
	\end{equation}
	Consequently, there exists $K_{2}>0$ such that the estimate (\ref{eq239}), becomes
	
	\begin{equation}\label{eq240}
	E^{P}_{n+1} \leq T^{P}_{n+1} + K_{1}  AB_{g}L h^{\alpha} E_{\frac{1}{2}} +  K_{2}  AB_{g}Lh^{\alpha} \sum_{k=1}^{n}(n+1-k)^{\alpha-1}E_{k} + O(h^{3}).
	\end{equation}
\end{proof}

\begin{theorem} \label{theorem23}Assume that $f(.,y(.)) \in C^{3}([0,T]).$ Then there exists positive constants $C_{1}$ and $C_{2}$ (independent of grid parameters) such that: 
	\begin{align}\label{eq241}
	T^{C}_{n+1} \leq C_{1}h^{3} + C_{2}E^{P}_{n+1},
	\end{align}
	where, 
	\begin{align}\label{eq242}
	T^{C}_{n+1} & = \Biggl|  AB_{g} \int_{0}^{t_{n+1}}(t_{n+1}-s)^{\alpha-1}  f(s,y(s)) ds - AB_{g} \sum_{j=0}^{2} a^{j,0}_{n+1}f_{\frac{j}{2}} \notag  \\  
	& \ \ \ \ -AB_{g} \left( \sum_{j=0}^{2}\sum_{k=1}^{n-1}a^{j,k}_{n+1} f_{k+j-1} + \sum_{j=0}^{1} a^{j,n}_{n+1} f_{n+j-1} + a^{2,n}_{n+1}f^{P}_{n+1} \right)  \Biggl|,
	\end{align}
	with, $AB_{g} =\displaystyle
	\frac{\alpha}{AB(\alpha)\Gamma(\alpha)}$.
\end{theorem}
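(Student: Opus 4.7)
The plan is to mimic the argument used for the predictor truncation error in Theorem~\ref{theorem21}, isolating the single extra contribution on the last subinterval $[t_{n},t_{n+1}]$ where the implicit value $f_{n+1}$ has been replaced by the predicted value $f^{P}_{n+1}$. First I would split
\begin{equation*}
T^{C}_{n+1}\le \mathcal{J}_{1}+\mathcal{J}_{2}+\mathcal{J}_{3},
\end{equation*}
where $\mathcal{J}_{1}$ and $\mathcal{J}_{2}$ are the pieces on $[0,t_{1}]$ and on the union of the intervals $[t_{k},t_{k+1}]$, $1\le k\le n-1$, which coincide term-by-term with $\mathcal{I}_{1}$ and $\mathcal{I}_{2}$ from the proof of Theorem~\ref{theorem21}. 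Since $f(\cdot,y(\cdot))\in C^{3}([0,T])$, Lemma~\ref{lemma22} gives $\mathcal{J}_{1}+\mathcal{J}_{2}\le C\,h^{3}$, exactly as in \eqref{est_I_1_2}.

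For the new contribution $\mathcal{J}_{3}$ on $[t_{n},t_{n+1}]$ I would add and subtract the quadratic Lagrange interpolant of $f(s,y(s))$ built from the nodes $t_{n-1},t_{n},t_{n+1}$ evaluated with the exact value $f_{n+1}$, obtaining
\begin{align*}
\mathcal{J}_{3}\le{}& AB_{g}\left|\int_{t_{n}}^{t_{n+1}}(t_{n+1}-s)^{\alpha-1}\Big(f(s,y(s))-\sum_{j=0}^{2}f_{n+j-1}\,q^{n}_{n+j-1}(s)\Big)ds\right|\\
&+AB_{g}\,|a^{2,n}_{n+1}|\,|f_{n+1}-f^{P}_{n+1}|.
\end{align*}
The first summand is $O(h^{3})$ by Lemma~\ref{lemma22} applied on $[t_{n},t_{n+1}]$. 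For the second, Lemma~\ref{lemma25} provides the uniform bound $|a^{2,n}_{n+1}|\le K_{2}$, and Lipschitz continuity of $f$ in its second argument (as assumed in Theorem~\ref{theorem22}) yields $|f_{n+1}-f^{P}_{n+1}|\le L\,E^{P}_{n+1}$. Assembling these estimates produces constants $C_{1},C_{2}>0$ with $T^{C}_{n+1}\le C_{1}h^{3}+C_{2}E^{P}_{n+1}$.

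The delicate point is precisely the third interval: one cannot invoke Lemma~\ref{lemma22} directly there, because the last node carries $f^{P}_{n+1}$ rather than the implicit value $f_{n+1}$. The triangle-inequality split above pushes the entire predictor-versus-implicit discrepancy into a single term, which is then controlled by the uniform coefficient estimate from Lemma~\ref{lemma25} together with the global predictor error $E^{P}_{n+1}$ already analyzed in Theorem~\ref{theorem22}; the remaining pieces of $\mathcal{J}_{3}$ are then standard consistency errors of quadratic Lagrange interpolation and contribute only $O(h^{3})$ to the final bound.
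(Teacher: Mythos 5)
Your proposal matches the paper's own proof essentially step for step: the same three-way splitting of the integral over $[0,t_1]$, $[t_k,t_{k+1}]$ for $1\le k\le n-1$, and $[t_n,t_{n+1}]$, with Lemma~\ref{lemma22} handling the first two pieces and the add-and-subtract trick on the last subinterval isolating the term $|f_{n+1}-f^{P}_{n+1}|$, bounded via the coefficient estimate and the Lipschitz condition to yield $C_{2}E^{P}_{n+1}$. If anything, you are slightly more explicit than the paper in spelling out the add-and-subtract step and in noting that the Lipschitz continuity of $f$ in its second argument (assumed in Theorem~\ref{theorem22} but not restated in Theorem~\ref{theorem23}) is genuinely needed for the final term.
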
 
\begin{proof}
	It follows immediately that,
	\begin{align}\label{eq243}
	T^{C}_{n+1} & \leq AB_{g} \int_{t_{0}}^{t_{1}}(t_{n+1} - s)^{\alpha-1} \Bigl|  f(s,y(s)) - \sum_{j=0}^{2} q^{0}_{\frac{j}{2}}(s) f_{\frac{j}{2}}\Bigl|  ds \notag \\ 
	& \ \ \ + AB_{g} \sum_{k=1}^{n-1} \int_{t_{k}}^{t_{k+1}}(t_{n+1} - s)^{\alpha-1} \Bigl|  f(s,y(s)) - \sum_{j=0}^{2} q^{k}_{k+j-1}(s) f_{k+j-1} \Bigl|  ds \notag \\ 
	& \ \ \ + AB_{g}  \int_{t_{n}}^{t_{n+1}}(t_{n+1} - s)^{\alpha-1} \Bigl|  f(s,y(s)) - \sum_{j=0}^{1} q^{n}_{n+j-1} (s) f_{n+j-1} - q^{n}_{n+1}(s) f^{P}_{n+1} \Bigl|  ds.
	\end{align}
	Thus, according to the lemma \ref{lemma22}, there exist $\widehat{C},\widetilde{C}>0$ such that
	\begin{equation}\label{eq244}
	T^{C}_{n+1} \leq \frac{\widetilde{C}T^{\alpha}}{6AB(\alpha)\Gamma(\alpha)} h^{3} + \frac{\widehat{C}h^{\alpha}}{AB(\alpha)\Gamma(\alpha)} E^{P}_{n+1} + O(h^{3}).
	\end{equation}
	The desired estimate (\ref{eq242}) can be derived by direct consideration of the last inequality (\ref{eq244}). 
\end{proof}

\begin{theorem}\label{theorem24}  (Global Error of the proposed method) With the same  assumptions as those of Theorem \ref{theorem22}. Then, we have 
	\begin{equation}\label{eq245}
	E_{n+1} := \left|  y_{n+1} - \widetilde{y}_{n+1} \right|  \leq Ch^{3},
	\end{equation}
	where $C>0$ (independent of grid parameters), given $E_{1}, E_{2} \leq \widehat{C}_{1}h^{3}$, and   $E_{\frac{1}{2}} \leq \widehat{C}_{2}h^{3-\alpha}$, with $\widehat{C}_{1},\widehat{C}_{2}>0$.
\end{theorem}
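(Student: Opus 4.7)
The plan is to combine the two truncation estimates (Theorems \ref{theorem21} and \ref{theorem23}) with the global predictor bound (Theorem \ref{theorem22}), convert every residual difference $|f_k-\widetilde f_k|$ into $LE_k$ via the Lipschitz hypothesis, and finish with the discrete Gronwall inequality (Lemma \ref{lemma24}).

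First I would write $E_{n+1}$ as the difference between the exact representation \eqref{Sol_MainS_Voltera_int_At_nplus1} and the corrector \eqref{y_corrector}. Adding and subtracting the exact $f$-values appearing in \eqref{eq212}--\eqref{eq213} gives
\begin{align*}
E_{n+1} \le{}& \tfrac{1-\alpha}{AB(\alpha)}\bigl|f(t_{n+1},y_{n+1})-f(t_{n+1},\widetilde y^{P}_{n+1})\bigr| + T^{C}_{n+1} \\
&+ AB_{g}\sum_{j=0}^{2}|a^{j,0}_{n+1}|\,|f_{j/2}-\widetilde f_{j/2}| + AB_{g}\sum_{j=0}^{2}\sum_{k=1}^{n-1}|a^{j,k}_{n+1}|\,|f_{k+j-1}-\widetilde f_{k+j-1}| \\
&+ AB_{g}\sum_{j=0}^{1}|a^{j,n}_{n+1}|\,|f_{n+j-1}-\widetilde f_{n+j-1}| + AB_{g}|a^{2,n}_{n+1}|\,|f_{n+1}-f(t_{n+1},\widetilde y^{P}_{n+1})|.
\end{align*}
Applying the Lipschitz condition to every $f$-difference replaces each term by $L$ times an $E$-term (or $LE^{P}_{n+1}$ for the two predictor terms).

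Next I would insert the two key estimates: Lemma \ref{lemma25} for the coefficients $|a^{j,k}_{n+1}|$ (splitting the $k=n$ pieces, which are bounded by $K_{2}$, from the $k<n$ pieces, which give $K_{1}(n-k)^{\alpha-1}h^{\alpha}$), and Theorem \ref{theorem23} which yields $T^{C}_{n+1}\le C_{1}h^{3}+C_{2}E^{P}_{n+1}$. At this point $E_{n+1}$ is controlled by $h^{3}$, by $E^{P}_{n+1}$, and by a weighted sum $h^{\alpha}\sum_{k=1}^{n}(n+1-k)^{\alpha-1}E_{k}$ (the contributions involving $E_{n+1}$ itself appear with coefficient $O(h^{\alpha})$ and can be absorbed on the left for $h$ small). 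Theorem \ref{theorem22} then bounds the predictor error by $T^{P}_{n+1}+K_{1}AB_{g}Lh^{\alpha}E_{1/2}+K_{2}AB_{g}Lh^{\alpha}\sum_{k=1}^{n}(n+1-k)^{\alpha-1}E_{k}+O(h^{3})$; using $T^{P}_{n+1}\le Ch^{3}$ from Theorem \ref{theorem21} together with the hypothesis $E_{1/2}\le\widehat C_{2}h^{3-\alpha}$ makes the $E_{1/2}$ contribution an $O(h^{3})$ term, so $E^{P}_{n+1}$ fits the same Gronwall-ready template as $E_{n+1}$.

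Putting everything together, I obtain an inequality of the form
\[
E_{n+1}\le \widehat b\,h^{3}+\widehat M\,h^{\alpha}\sum_{k=1}^{n}(n+1-k)^{\alpha-1}E_{k},
\]
valid for all $n\ge 2$, with the starting values $E_{1},E_{2}\le\widehat C_{1}h^{3}$ and $E_{1/2}\le\widehat C_{2}h^{3-\alpha}$ absorbed into $\widehat b$. Applying Lemma \ref{lemma24} with $\theta=\alpha$, $a_{n}=E_{n}$, $b_{n}=\widehat b\,h^{3}$ (constant, hence monotone) yields $E_{n+1}\le\widehat b\,h^{3}\,\mathbf E_{\alpha}\bigl(\widehat M\,\Gamma(\alpha)\,T^{\alpha}\bigr)$, and since $nh\le T$ the Mittag-Leffler factor is a bounded constant, giving $E_{n+1}\le C h^{3}$ as claimed.

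The main obstacle I anticipate is the bookkeeping rather than any deep estimate: carefully splitting the $k=n$ terms in $\widetilde y^{inc}_{n+1}$ from the $k<n$ terms in $\widetilde y^{lag}_{n+1}$, absorbing the implicit $E_{n+1}$ contribution on the left (which requires $h$ small enough that the $a^{2,n}_{n+1}$ coefficient does not dominate), and verifying that the assumption $E_{1/2}\le\widehat C_{2}h^{3-\alpha}$ is exactly what is needed to convert the $h^{\alpha}E_{1/2}$ term coming from Theorem \ref{theorem22} into an $O(h^{3})$ contribution before invoking Gronwall.
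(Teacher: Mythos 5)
Your proposal follows essentially the same route as the paper's proof: the same decomposition of $E_{n+1}$ into $T^{C}_{n+1}$, Lipschitz-converted $E_{k}$ terms weighted by the $a^{j,k}_{n+1}$ bounds of Lemma \ref{lemma25}, insertion of Theorems \ref{theorem21}--\ref{theorem23} to control $T^{C}_{n+1}$ and $E^{P}_{n+1}$, use of the hypothesis $E_{1/2}\leq\widehat C_{2}h^{3-\alpha}$ to absorb the $h^{\alpha}E_{1/2}$ term, and a final application of the discrete Gronwall inequality with $\theta=\alpha$. The only cosmetic difference is your remark about absorbing an implicit $E_{n+1}$ on the left, which is unnecessary here since the corrector evaluates the last node at the predictor value, so only $E^{P}_{n+1}$ (not $E_{n+1}$) appears on the right-hand side.
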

\begin{proof} 
	By taking into account the previous results, there exist $C_{j}>0$ (with $j=1,\dots,10$), such that 
	\begin{align}\label{eq246}
	E_{n+1} & \leq T^{C}_{n+1} +AB_{g} \left|  f^{P}_{n+1} - \widetilde{f}^{P}_{n+1} \right|  + \frac{L(1-\alpha)}{AB(\alpha)} E^{P}_{n+1} + AB_{g} \sum_{j=0}^{2} \left|  a^{j,0}_{n+1} \right|  \left|  f_{\frac{j}{2}} - \widetilde{f}_{\frac{j}{2}} \right|  \notag \\ 
	& \ \ \ +AB_{g}\sum_{j=0}^{2} \sum_{k=1}^{n-1} \left|  a^{j,k}_{n+1} \right|  \left|  f_{k+j-1} - \widetilde{f}_{k+j-1} \right|  + AB_{g} \sum_{j=1}^{1} \left|  a^{j,n}_{n+1} \right|  \left|  f_{n+j-1} - \widetilde{f}_{n+j-1} \right| \notag  \\  
	&\leq T^{C}_{n+1} + \frac{L(1-\alpha)}{AB(\alpha)} E^{P}_{n+1} + AB_{g} Lh^{\alpha} E_{\frac{1}{2}} + \frac{\widetilde{C}_{1}h^{\alpha}}{AB(\alpha)\Gamma(\alpha)} E^{P}_{n+1} \notag \\  
	& \ \ \ + AB_{g} \widetilde{C}_{2} Lh^{\alpha}  \sum_{k=1}^{n}(n-k+1)^{\alpha-1} E_{k} + O(h^{3}) \notag \\  
	&  \leq \widetilde{C}_{3} h^{3} + \widetilde{C}_{4}E^{P}_{n+1} + \frac{L(1-\alpha)}{AB(\alpha)} E^{P}_{n+1} + AB_{g} Lh^{\alpha}E_{\frac{1}{2}} \notag\\  
	& \ \ \ + \frac{\widetilde{C}_{5}h^{\alpha}}{AB(\alpha)}E^{p}_{n+1} + AB_{g} \widetilde{C}_{6} Lh^{\alpha} \sum_{k=1}^{n}(n-k+1)^{\alpha-1} E_{k}  \notag\\  
	&  \leq \widetilde{C}_{7}h^{3} + \widetilde{C}_{8}T^{P}_{n+1} + \widetilde{C}_{9}h^{\alpha} E_{\frac{1}{2}}  +\widetilde{C}_{10}h^{\alpha} \sum_{k=1}^{n}(n-k+1)^{\alpha-1} E_{k}.
	\end{align}
	Consequently, by the discrete Gronwall's inequality (i.e. lemma \ref{lemma24}) the desired result holds.
\end{proof}

\begin{remark}
	Since the global error indicated in the Theorem \ref{theorem24} is dependent on that of the start-up ($E_{\frac{1}{2}},E_{1}$, and $E_{2}$). We suggest employing the start-up scheme described in \textbf{Appendix A} to generate approximate solutions for the first stages (i.e. $\widetilde{y}_{\frac{1}{2}},\widetilde{y}_{1}$, and $\widetilde{y}_{2}$).
\end{remark}

\section{Numerical Illustrations and Simulation}
In this section, we give some numerical experiments through the proposed predictor-corrector scheme (PPC) (\ref{y_corrector})-(\ref{eq216}), the predictor-corrector method introduced by Baleanu-Jajarmi-Hajipour (BJH-PC) \cite{baleanu2018nonlinear}, and the explicit numerical scheme introduced by Toufik-Atangana (TAE) \cite{Toufik2017}, to show the efficiency and accuracy of our new method. The experimental order of convergence (EOC) is computed by
\begin{equation}
EOC= log_{2} \frac{AE\left(\frac{N}{2}\right)}{AE\left(N\right)},
\end{equation}
where $AE$ is the absolute error, which takes the following form:
\begin{equation}
AE:=AE(N)=\underset{1\leqslant k\leqslant N}{\max}\left| y\left(t_{k}\right)-\widetilde{y}_{k} \right|.
\end{equation}

\begin{example}\label{Example1}
	Consider the following fractional Initial value problem (IVP):
	\begin{equation}\label{Eq_Example1}
	\left\lbrace \begin{matrix}
	{}^{ABC}_{\quad \,0}D^{\alpha}_{t} y\left(t\right)= t^{n},   &       0< t \leq 2, \\ \\
	y\left(0\right)= 1, &
	\end{matrix}\right. 
	\end{equation}
	where $\alpha\in(0,1)$, and $n\in\mathbb{N}$. On account of \cite{baleanu2018nonlinear}, the exact solution of (\ref{Eq_Example1}) is given by 
	\begin{align}
	&y\left(t\right)=1+\frac{1-\alpha}{AB\left(\alpha\right)}t^{n}+\frac{\alpha \Gamma\left(n+1\right)}{AB\left(\alpha\right)\Gamma\left(\alpha+n+1\right)}t^{\alpha+n}.
	\end{align}
	In addition, the problem \eqref{Eq_Example1} is numerically solved.  
	Table \ref{Ex1_Table1} and table \ref{Ex1_Table2} show the comparison of absolute error of different numerical methods ((PPC) (\ref{y_corrector})-(\ref{eq216}), BJH-PC \cite{baleanu2018nonlinear}, and TAE \cite{Toufik2017}) for \eqref{Eq_Example1} with $n=2,3$, $t \in \left[0,2\right]$ and various values of fractional order $\alpha\in \left\lbrace 0.5,0.7,0.9,0.99\right\rbrace $, where we notice that our method is superior to them in terms of accuracy (i.e. PPC (\ref{y_corrector})-(\ref{eq216}) achieves a lower error than TAE and BJH-PC). Moreover, table \ref{Ex1_Table1} and table \ref{Ex1_Table2} offer that EOC for TAE are roughly $1$, for BJH-PC are roughly $2$, and for PPC (\ref{y_corrector})-(\ref{eq216}) are roughly $3$. Also, we note that the approximate solutions obtained by our proposed scheme get closer to the exact solutions by the increase in $\alpha$. From Figure \ref{Fig1_Example1} and Figure \ref{Fig2_Example1}, we note that the approximate solution obtained by PPC (\ref{y_corrector})-(\ref{eq216}) almost matches with the exact solution with small step size compared to its counterparts. 
	These figures and tables, indicate the efficacy of the current predictor-corrector numerical method (PPC) (\ref{y_corrector})-(\ref{eq216}).
	
	\begin{table}
		\centering
		\caption{The absoluate error of various numerical methods for problem (\ref{Eq_Example1}) with $n=2$, N=$40$, and various values of $\alpha$.}
		\begin{tabular}{|l|l|l|l|l|}
			\hline\noalign{\smallskip}
			Methods   & \qquad $\alpha=0.5$ & \qquad $\alpha=0.7$ & \qquad $\alpha=0.9$ & \qquad $\alpha=0.99$\\
			\noalign{\smallskip}
			\hline
			\noalign{\smallskip}
			PPC  (\ref{y_corrector})-(\ref{eq216})
			& 
			\qquad $5.3 e{-15}$
			& 
			\qquad $1.8 e{-15}$
			& 
			\qquad $3.6 e{-15}$
			& 
			\qquad $9.0 e{-16}$
			\parbox[t]{1cm}{
				\raggedright}\\
			BJH-PC \cite{baleanu2018nonlinear}
			& 
			\qquad $4.1 e{-4}$
			& 
			\qquad $6.2 e{-4}$
			& 
			\qquad $7.7 e{-4}$
			& 
			\qquad $8.3 e{-4}$
			\parbox[t]{1cm}{
				\raggedright}\\
			TAE \cite{Toufik2017} 
			& 
			\qquad $1.3 e{-1}$
			& 
			\qquad $7.4 e{-2}$
			& 
			\qquad $2.5 e{-2}$
			& 
			\qquad $6.1 e{-3}$
			\parbox[t]{1cm}{
				\raggedright}\\						
			\hline
		\end{tabular}
	\end{table}\label{Ex1_Table1}
	
	\begin{table}
		\centering
		\caption{The absolute error, experimental order of convergence, and CPU time in seconds (CTs) of various numerical methods for problem (\ref{Eq_Example1}) with $n=3$.}
		\begin{tabular}{|l | c|ccc|ccc|ccc|}
			\hline
			\multicolumn{1}{|c}{Methods} & \multicolumn{1}{|c|}{}  & \multicolumn{3}{c|}{$\alpha=0.5$} & \multicolumn{3}{c|}{$\alpha=0.7$} & \multicolumn{3}{c|}{$\alpha=0.9$} \\
			&N& AE  & EOC  & CTs  & AE  & EOC  & CTs   & AE  & EOC  & CTs   \\
			\hline
			PPC  (\ref{y_corrector})-(\ref{eq216})
			&$10 $ & $1.8e{-3}$ & $-$ & $3.8e{-2}$  
			&$2.7e{-3}$  & $-$ &$2.4e{-3}$
			&$3.4e{-3}$  & $-$ & $2.2e{-3}$   \\
			&$20$&$2.4e{-4}$ & $2.94$ &$7.7e{-3}$   &$3.6e{-4}$ & $2.94$ &$7.3e{-3}$
			& $4.4e{-4}$  & $2.93$ & $6.4e{-3}$  \\
			&$40$&$3.1e{-5}$&$2.97$ &$2.0e{-2}$   
			& $4.6e{-5}$ & $2.97$& $2.1e{-2}$ 
			& $5.7e{-5}$  & $2.97$ & $2.1e{-2}$ \\
			&$80$ & $3.9e{-6}$& $2.98$&$7.5e{-2}$   
			& $5.8e{-6}$  &$2.99$ & $7.6e{-2}$ 
			& $7.2e{-6}$  & $2.98$ & $7.5e{-2}$  \\
			&$160$&$4.9e{-7}$&$2.99$&$0.31$   
			& $7.2e{-7}$  &$2.99$ & $0.29$ 
			& $9.0e{-7}$  & $2.99$ & $0.29$\\
			&$320$  & $6.2e{-8}$& $2.99$ &$1.12$   
			& $9.1e{-8}$  &$3.00$ & $1.14$ 
			& $1.1e{-7}$  & $3.00$ & $1.15$\\
			\hline
			BJH-PC \cite{baleanu2018nonlinear}
			&$10 $  & $2.4e{-2}$ & $-$ & $2.7e{-2}$   
			& $3.4e{-2}$ & $-$ & $1.8e{-3}$ 
			& $3.9e{-2}$ & $-$ & $9.0e{-4}$   \\
			&$20$& $6.3e{-3}$ & $1.95$ & $3.5e{-3}$ 
			& $8.6e{-3}$ & $1.98$ & $1.7e{-3}$ 
			& $9.7e{-3}$ & $2.00$ & $1.9e{-3}$   \\
			&$40$ &$1.6e{-3}$ & $1.97$ & $3.7e{-3}$ 
			&  $2.2e{-3}$ & $1.99$ & $3.9e{-3}$ 
			& $2.4e{-3}$ & $2.00$ & $4.0e{-3}$   \\
			&$80$ &$4.1e{-4}$& $1.98$ & $9.3e{-3}$  
			& $5.4e{-4}$ & $1.99$ & $1.0e{-2}$  
			& $6.1e{-4}$ & $2.00$ & $1.0e{-2}$   \\
			&$160$ &$1.0e{-4}$& $1.99$& $2.6e{-2}$  
			& $1.4e{-4}$ & $2.00$ & $3.1e{-2}$  
			& $1.5e{-4}$ & $2.00$ & $3.1e{-2}$   \\
			&$320$ & $2.6e{-5}$&$1.99$& $8.4e{-2}$  
			& $3.4e{-5}$ & $2.00$ & $0.10$  
			& $3.8e{-5}$ & $2.00$ & $0.11$   \\
			\hline
			TAE \cite{Toufik2017}					
			&10 & $1.5e{-0}$ & $-$ & $9.6e{-3}$  
			& $9.5e{-1}$  & $-$ & $2.6e{-3}$
			& $4.1e{-1}$  & $-$ & $1.3e{-3}$   \\
			&20 & $7.7e{-1}$ & $1.01$ & $4.7e{-3}$   
			& $4.5e{-1}$ & $1.07$ & $4.6e{-3}$
			& $1.7e{-1}$ & $1.28$ & $4.8e{-3}$  \\
			&40  & $3.9e{-1}$& $1.00$ & $1.7e{-2}$  
			& $2.2e{-1}$ & $1.04$& $1.8e{-2}$ 
			& $7.4e{-2}$  & $1.18$ & $1.8e{-2}$ \\
			&80  & $1.9e{-1}$& $1.00$ & $8.4e{-2}$  
			& $1.1e{-1}$  &$1.02$ & $7.0e{-2}$ 
			& $3.4e{-2}$ & $1.10$ & $7.0e{-2}$\\
			&160  & $9.6e{-2}$& $1.00$ & $0.27$  
			& $5.4e{-2}$  &$1.01$ & $0.33$ 
			& $1.7e{-2}$  & $1.06$ & $0.29$\\
			&320    & $4.8e{-2}$& $1.00$ & $1.11$   
			& $2.7e{-2}$ & $1.00$ & $1.13$ 
			& $8.1e{-3}$& $1.03$ & $1.15$\\
			\hline
		\end{tabular}
	\end{table}\label{Ex1_Table2}
	
	\begin{figure}[tbp]
		\includegraphics[width=6.5in]{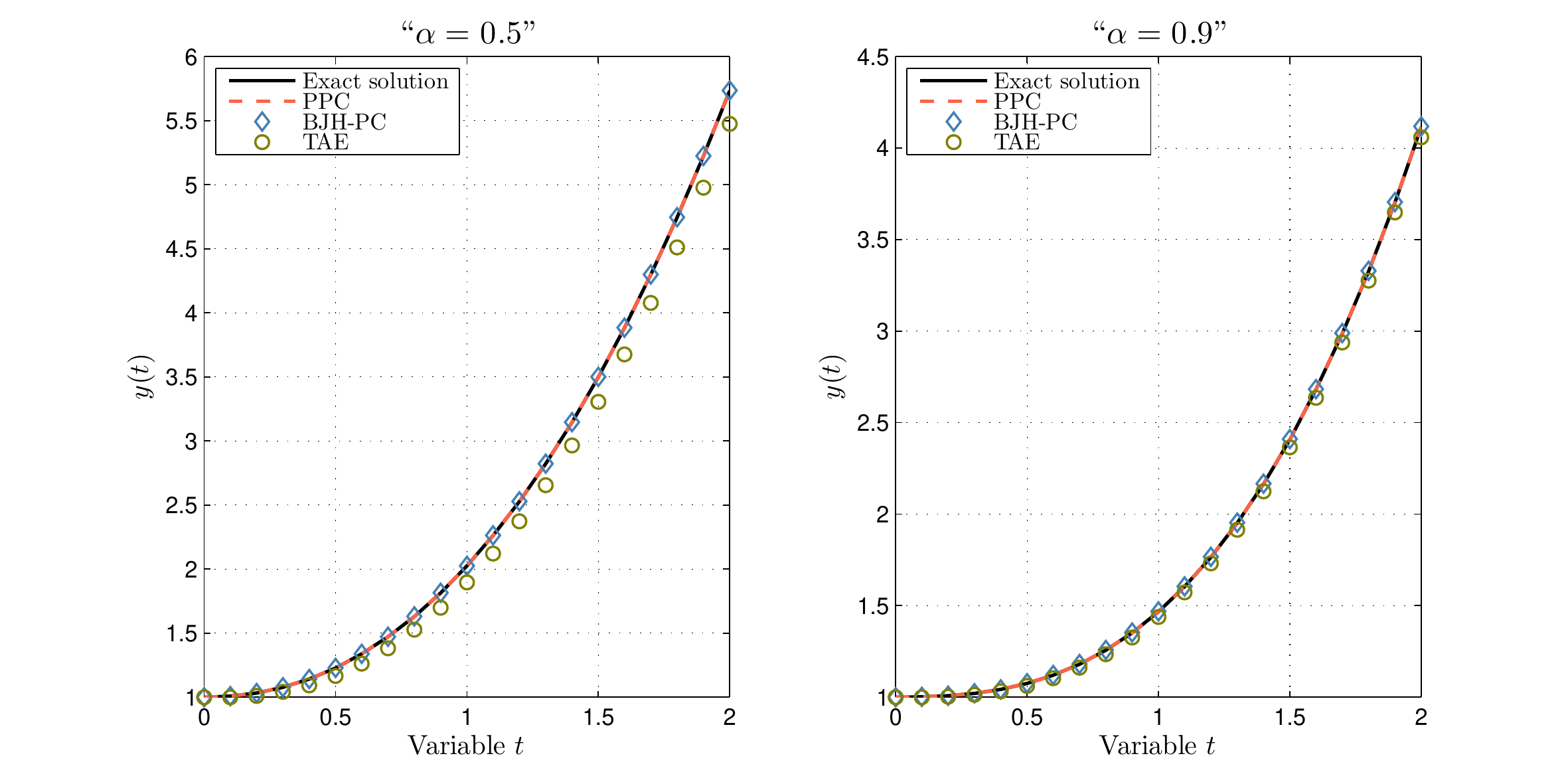}
		\caption{Comparison of the exact and the numerical solutions of problem (\ref{Eq_Example1}) with $n=2$, and $N=10$.}
		\label{Fig1_Example1}
	\end{figure}
	
	\begin{figure}[tbp]
		\centering
		\includegraphics[width=6.5in]{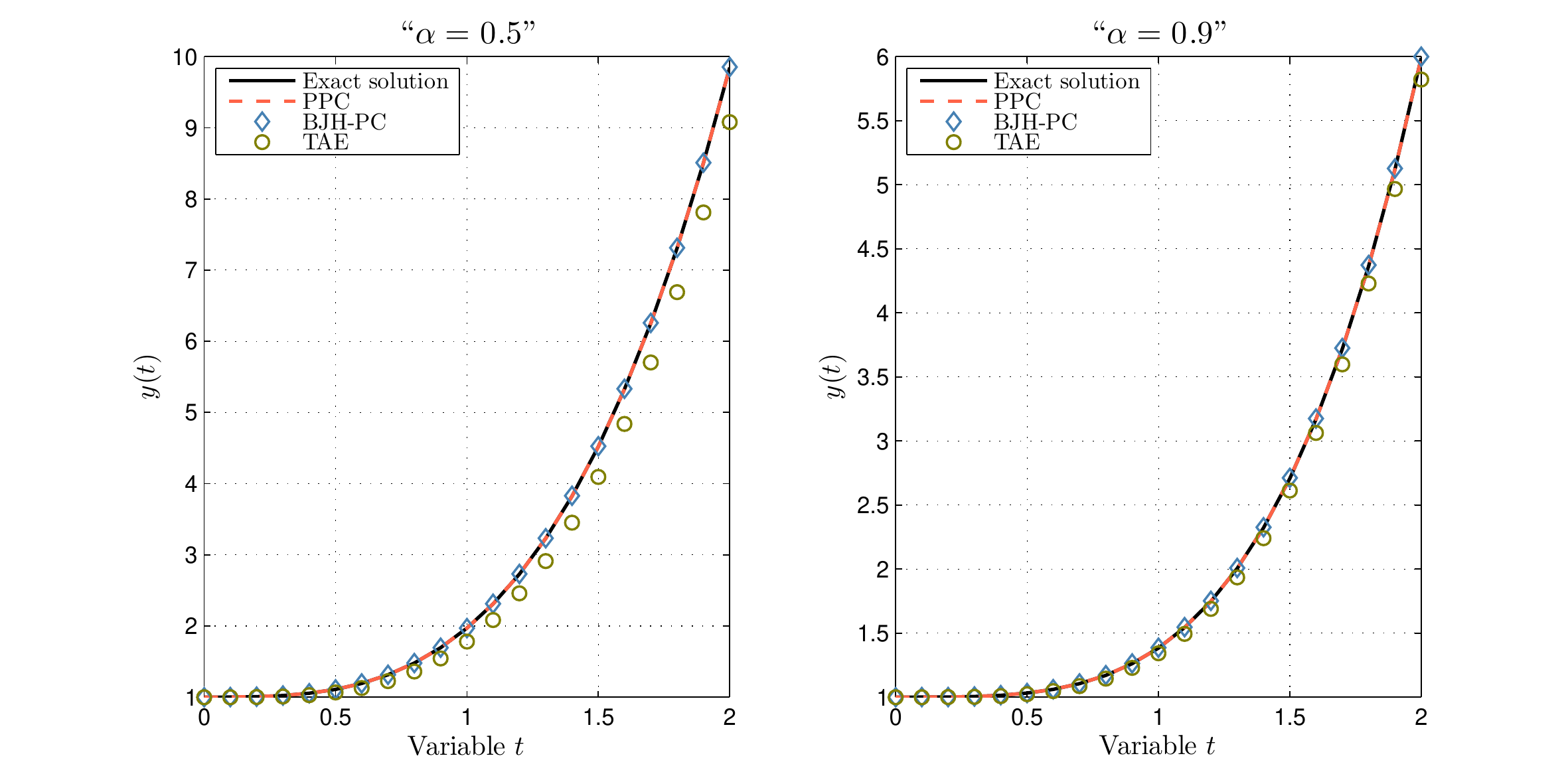}
		\caption{Comparison of the exact and the numerical solutions of problem (\ref{Eq_Example1}) with $n=3$, and $N=10$.}
		\label{Fig2_Example1}
	\end{figure}
	
\end{example}

\begin{example}\label{Example2}
	Consider the following Atangana-Beleanu-fractional differential equation:
	\begin{equation}\label{Eq_Example2}
	\left\lbrace \begin{matrix}
	{}^{ABC}_{\quad \,0}D^{\alpha}_{t} y\left(t\right)= t-y(t),   &       0< t \leq 1, \\ \\
	y\left(0\right)= 0, &
	\end{matrix}\right. 
	\end{equation}
	where $\alpha\in(0,1)$. According to \cite{baleanu2018nonlinear}, the exact solution of (\ref{Eq_Example2}) is given by 
	\begin{equation}\label{ExactS_Exm2}
	y\left(t\right)=\displaystyle
	\frac{1}{AB\left(\alpha\right)+1-\alpha} \left(  \left(1-\alpha\right)t   \textbf{E}_{\alpha,2}\left( -\frac{\alpha}{AB\left(\alpha\right)+1-\alpha} t^{\alpha}\right) +\alpha t^{\alpha+1}    \textbf{E}_{\alpha,\alpha+2}\left(- \frac{\alpha}{AB\left(\alpha\right)+1-\alpha} t^{\alpha}\right)  \right),
	\end{equation}
	with $ \textbf{E}_{\alpha,\beta}(.)$ denotes the Mittag-Leffler function of two parameters $\alpha$ and $\beta$, which defined by
	\begin{equation}
	\begin{matrix}
	\textbf{E}_{\alpha,\beta}(z)=\displaystyle\sum^{\infty}_{k=0}\frac{{z}^{k}}{\Gamma (\alpha k+1)}, & z,\alpha,\beta\in \mathbb{C},& \text{and }Re(\alpha),Re(\beta)>0,
	\end{matrix}
	\end{equation}
	and the exact solution \eqref{ExactS_Exm2} is calculated using the algorithm $mlf.m$ (see \cite{PodlubnyMlf}) evaluated with accuracy $10^{-12}$. Furthermore, the problem \eqref{Eq_Example2} is numerically solved.  
	Table \ref{Ex2_Table1} and table \ref{Ex2_Table2} show the comparison of absolute error of different numerical methods ((PPC) (\ref{y_corrector})-(\ref{eq216}), BJH-PC \cite{baleanu2018nonlinear}, and TAE \cite{Toufik2017}) for \eqref{Eq_Example2} with $AB(\alpha)=1$, $AB(\alpha)=1-\alpha+\frac{\alpha}{\Gamma(\alpha)}$, $t \in \left[0,1\right]$ and various values of fractional order $\alpha\in \left\lbrace 0.5,0.55,0.7,0.9,0.95\right\rbrace $, where we notice that our method is superior to them in terms of accuracy (i.e. PPC (\ref{y_corrector})-(\ref{eq216}) achieves a lower error than TAE and BJH-PC). From Figure \ref{Fig1_Example2} and Figure \ref{Fig2_Example2}, we note that the approximate solution obtained by PPC (\ref{y_corrector})-(\ref{eq216}) almost matches with the exact solution with small step size compared to its counterparts.
	These figures and tables, again confirm the efficacy of the current predictor-corrector numerical method (PPC) (\ref{y_corrector})-(\ref{eq216}).

	\begin{table}
		\centering
		\caption{The absolute error, experimental order of convergence, and CPU time in seconds (CTs) of various numerical methods for problem (\ref{Eq_Example2}) with $AB(\alpha)=1$.}
		\begin{tabular}{|l | c|ccc|ccc|ccc|}
			\hline
			\multicolumn{1}{|c}{Methods} & \multicolumn{1}{|c|}{}  & \multicolumn{3}{c|}{$\alpha=0.55$} & \multicolumn{3}{c|}{$\alpha=0.75$} & \multicolumn{3}{c|}{$\alpha=0.95$} \\
			&N& AE  & EOC  & CTs  & AE  & EOC  & CTs   & AE  & EOC  & CTs   \\
			\hline
			PPC  (\ref{y_corrector})-(\ref{eq216})
			&10  & $1.5e{-3}$ & $-$ & $3.4e{-2}$  
			& $3.5e{-4}$  & $-$ & $1.1e{-3}$
			& $2.7e{-5}$  & $-$ & $6.0e{-4}$   \\
			&20  & $7.4e{-4}$ & $1.03$ & $2.9e{-3}$   
			& $8.8e{-5}$ & $1.99$ & $1.1e{-3}$
			& $3.2e{-6}$ & $3.05$ & $1.1e{-3}$  \\
			&40  & $3.7e{-4}$& $0.99$ & $3.5e{-3}$   
			& $2.3e{-5}$ & $1.93$ & $3.0e{-3}$ 
			& $4.4e{-7}$ & $2.89$ & $3.0e{-3}$   \\
			&80  & $1.8e{-4}$& $1.03$ & $1.0e{-2}$   
			& $6.1e{-6}$  &$1.91$ & $1.0e{-2}$ 
			& $6.7e{-8}$  & $2.70$& $1.0e{-2}$    \\
			&160 & $9.0e{-5}$& $1.03$ & $3.8e{-2}$   
			& $2.7e{-6}$  &$1.21$ & $3.7e{-2}$ 
			& $1.2e{-8}$  &$2.51$ & $3.7e{-2}$    \\
			&320 & $4.4e{-5}$& $1.03$ & $0.14$   
			& $1.3e{-6}$  &$1.03$ & $0.14$ 
			& $2.3e{-9}$  &$2.33$ & $0.14$    \\
			\hline
			BJH-PC \cite{baleanu2018nonlinear}
			&10  & $2.6e{-2}$ & $-$ & $1.7e{-2}$  
			& $8.9e{-3}$ & $-$ & $8.0e{-4}$ 
			& $1.8e{-3}$ & $-$ & $1.0e{-3}$   \\
			&20 & $1.2e{-2}$ & $1.18$ & $3.3e{-3}$ 
			& $3.7e{-3}$ & $1.26$ & $1.5e{-3}$    
			& $6.7e{-4}$ & $1.41$ & $1.7e{-3}$   \\
			&40 & $5.3e{-3}$ & $1.13$ & $3.4e{-3}$ 
			& $1.7e{-3}$ & $1.16$ & $3.3e{-3}$ 
			& $2.8e{-4}$ & $1.25$ & $3.3e{-3}$   \\
			&80 & $2.5e{-3}$ & $1.09$ & $8.2e{-3}$  
			& $7.8e{-4}$ & $1.09$ & $8.3e{-3}$  
			& $1.3e{-4}$ & $1.15$ & $8.2e{-3}$   \\
			&160& $1.2e{-3}$ & $1.06$ & $2.5e{-2}$  
			& $3.8e{-4}$ & $1.06$ & $2.4e{-2}$  
			& $6.0e{-5}$ & $1.08$ & $2.4e{-2}$   \\
			&320& $5.8e{-4}$ & $1.04$ & $7.9e{-2}$  
			& $1.8e{-4}$ & $1.03$ & $7.9e{-2}$  
			& $2.9e{-5}$ & $1.04$ & $7.7e{-2}$   \\
			\hline
			TAE \cite{Toufik2017}					&10  & $3.6e{-2}$ & $-$ & $5.6e{-3}$  
			& $2.5e{-2}$  & $-$ & $7.0e{-4}$
			& $9.6e{-3}$  & $-$ & $6.0e{-4}$   \\
			&20  & $1.7e{-2}$ & $1.06$ & $2.5e{-3}$   
			& $1.2e{-2}$ & $1.12$ & $2.2e{-3}$
			& $3.7e{-3}$  & $1.40$ & $2.2e{-3}$  \\
			&40  & $8.4e{-3}$& $1.05$ & $8.8e{-3}$   
			& $5.5e{-3}$ & $1.08$& $8.6e{-3}$ 
			& $1.5e{-3}$ & $1.26$ & $8.4e{-3}$ \\
			&80  & $4.1e{-3}$& $1.03$ & $3.6e{-2}$   
			& $2.6e{-3}$  & $1.05$ & $3.4e{-2}$ 
			& $6.8e{-4}$  & $1.16$ & $3.4e{-2}$\\
			&160 & $2.0e{-3}$& $1.02$ & $0.14$   
			& $1.3e{-3}$  & $1.03$ & $0.13$ 
			& $3.2e{-4}$  & $1.09$ & $0.14$\\
			&320 & $9.9e{-4}$& $1.02$ & $0.55$   
			& $6.4e{-4}$  & $1.02$ & $0.54$ 
			& $1.5e{-4}$  & $1.05$ & $0.55$\\
			\hline
		\end{tabular}
	\end{table}\label{Ex2_Table1}
	
	\begin{table}
		\centering
		\caption{The absolute error, experimental order of convergence, and CPU time in seconds (CTs) of various numerical methods for problem (\ref{Eq_Example2}) with $AB(\alpha)=1-\alpha+\frac{\alpha}{\Gamma(\alpha)}$.}
		\begin{tabular}{|l | c|ccc|ccc|ccc|}
			\hline
			\multicolumn{1}{|c}{Methods} & \multicolumn{1}{|c|}{}  & \multicolumn{3}{c|}{$\alpha=0.55$} & \multicolumn{3}{c|}{$\alpha=0.75$} & \multicolumn{3}{c|}{$\alpha=0.95$} \\
			&N& AE  & EOC  & CTs  & AE  & EOC  & CTs   & AE  & EOC  & CTs   \\
			\hline
			PPC  (\ref{y_corrector})-(\ref{eq216})
			&10  & $2.3e{-2}$ & $-$ & $3.4e{-2}$  
			& $5.5e{-4}$  & $-$ & $1.1e{-3}$
			& $2.9e{-5}$  & $-$ & $6.0e{-4}$   \\
			&20  & $9.7e{-3}$ & $1.25$ & $2.7e{-3}$   
			& $1.3e{-4}$ & $2.03$ & $1.2e{-3}$
			& $3.5e{-6}$  & $3.05$ & $1.1e{-3}$  \\
			&40  & $4.3e{-3}$& $1.17$ & $3.5e{-3}$   
			& $3.4e{-5}$ & $1.99$ & $3.0e{-3}$ 
			& $4.7e{-7}$ & $2.88$ & $3.1e{-3}$ \\
			&80    & $2.0e{-3}$& $1.12$ & $1.1e{-2}$   
			& $1.2e{-5}$  & $1.45$ & $1.0e{-2}$ 
			& $7.3e{-8}$  & $2.70$ & $1.0e{-2}$\\
			&160   & $9.3e{-4}$& $1.09$ & $3.8e{-2}$   
			& $6.1e{-6}$  &$1.01$ & $3.7e{-2}$ 
			& $1.3e{-8}$  &$2.51$ & $3.7e{-2}$\\
			&320   & $4.5e{-4}$& $1.06$ & $0.14$   
			& $3.0e{-6}$  & $1.01$ & $0.14$ 
			& $2.6e{-9}$  & $2.33$ & $0.14$\\
			\hline
			BJH-PC \cite{baleanu2018nonlinear}
			&10  & $4.9e{-2}$ & $-$ & $1.6e{-2}$  
			& $1.2e{-2}$ & $-$ & $9.0e{-4}$ 
			& $1.9e{-3}$ & $-$ & $8.0e{-4}$   \\
			&20 & $2.1e{-2}$ & $1.23$ & $3.2e{-3}$ 
			& $4.9e{-3}$ & $1.26$ & $1.5e{-3}$    
			& $6.9e{-4}$ & $1.41$ & $1.5e{-3}$   \\
			&40 & $9.5e{-3}$ & $1.15$ & $3.3e{-3}$ 
			&  $2.2e{-3}$ & $1.17$ & $3.4e{-3}$ 
			& $2.9e{-4}$  & $1.25$ & $3.2e{-3}$   \\
			&80 & $4.4e{-3}$ & $1.10$ & $8.2e{-3}$  
			& $1.0e{-3}$ & $1.11$ & $8.3e{-3}$  
			& $1.3e{-4}$ & $1.15$ & $8.3e{-3}$   \\
			&160& $2.1e{-3}$ & $1.07$ & $2.4e{-2}$  
			& $4.8e{-4}$ & $1.07$ & $2.4e{-2}$  
			& $6.2e{-5}$ & $1.08$ & $2.4e{-2}$   \\
			&320& $1.0e{-3}$ & $1.05$ & $7.8e{-2}$  
			& $2.4e{-4}$ & $1.04$ & $7.8e{-2}$  
			& $3.0e{-5}$ & $1.04$ & $8.5e{-2}$   \\
			\hline
			TAE \cite{Toufik2017}					
			&10  & $4.2e{-2}$ & $-$ & $5.5e{-3}$  
			& $2.8e{-2}$  & $-$ & $6.0e{-4}$
			& $9.9e{-3}$  & $-$ & $6.0e{-4}$   \\
			&20  & $2.0e{-2}$ & $1.06$ & $2.5e{-3}$   
			& $1.3e{-2}$ & $1.11$ & $2.2e{-3}$
			& $3.8e{-3}$  & $1.39$ & $2.3e{-3}$  \\
			&40  & $9.7e{-3}$& $1.04$ & $8.8e{-3}$   
			& $6.1e{-3}$ & $1.08$& $8.5e{-3}$ 
			& $1.6e{-3}$ & $1.26$ & $8.7e{-3}$ \\
			&80  & $4.8e{-3}$& $1.03$ & $3.5e{-2}$   
			& $3.0e{-3}$  & $1.05$ & $3.4e{-2}$ 
			& $7.0e{-4}$  & $1.16$ & $3.4e{-2}$  \\
			&160 & $2.3e{-3}$& $1.02$ & $0.14$   
			& $1.4e{-3}$  & $1.03$ & $0.14$ 
			& $3.3e{-4}$  & $1.09$ & $0.14$  \\
			&320 & $1.2e{-3}$& $1.02$ & $0.56$   
			& $7.2e{-4}$  & $1.02$ & $0.55$ 
			& $1.6e{-4}$  & $1.05$ & $0.57$  \\
			\hline
		\end{tabular}
	\end{table}\label{Ex2_Table2}
	
	\begin{figure}[tbp]
		\includegraphics[width=6.5in]{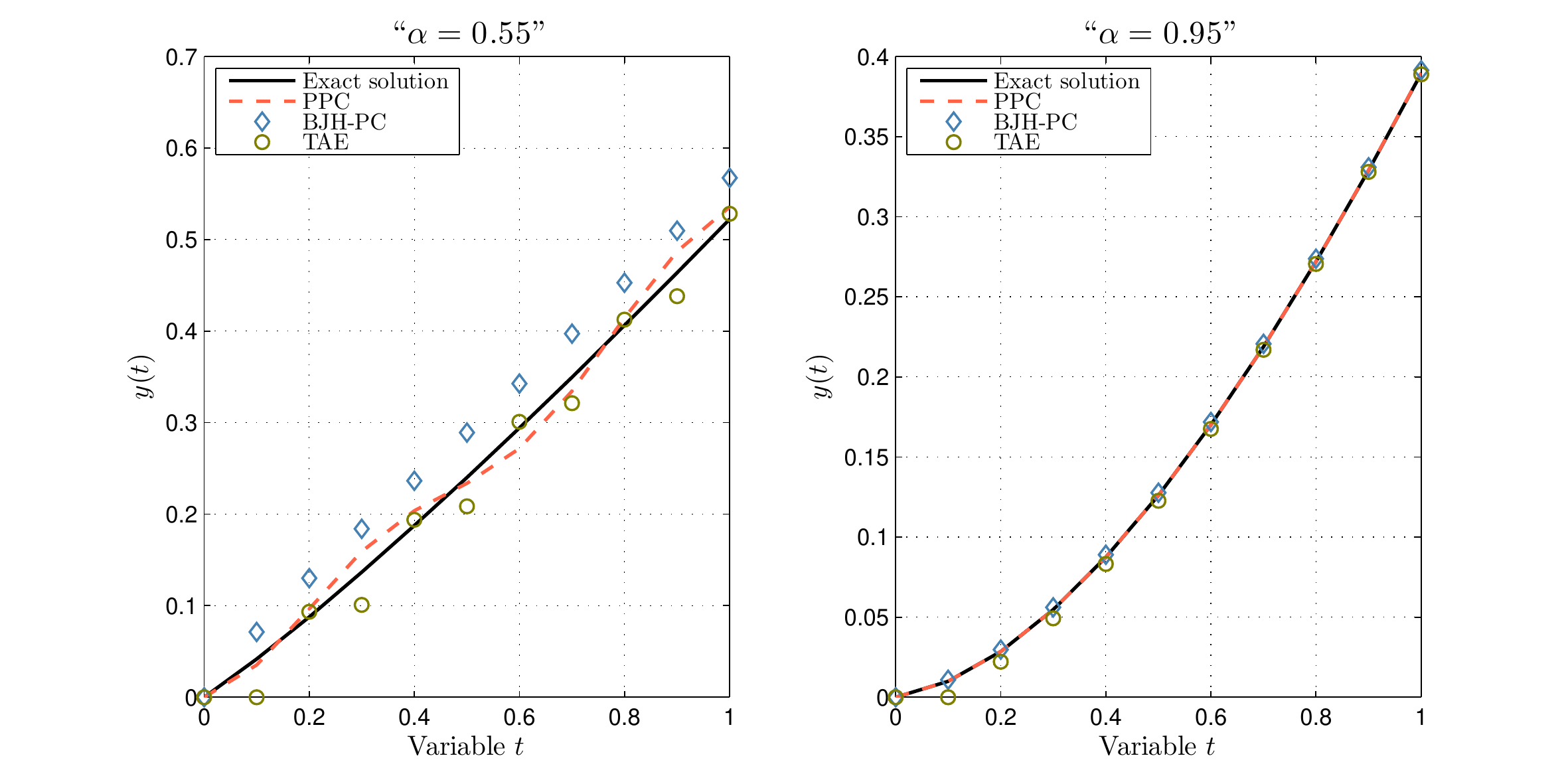}
		\caption{Comparison of the exact and the numerical solutions of problem (\ref{Eq_Example2}) with $N=10$, and $AB(\alpha)=1-\alpha+\frac{\alpha}{\Gamma(\alpha)}$.}
		\label{Fig1_Example2}
	\end{figure}
	
	\begin{figure}[tbp]
		\centering
		\includegraphics[width=6.5in]{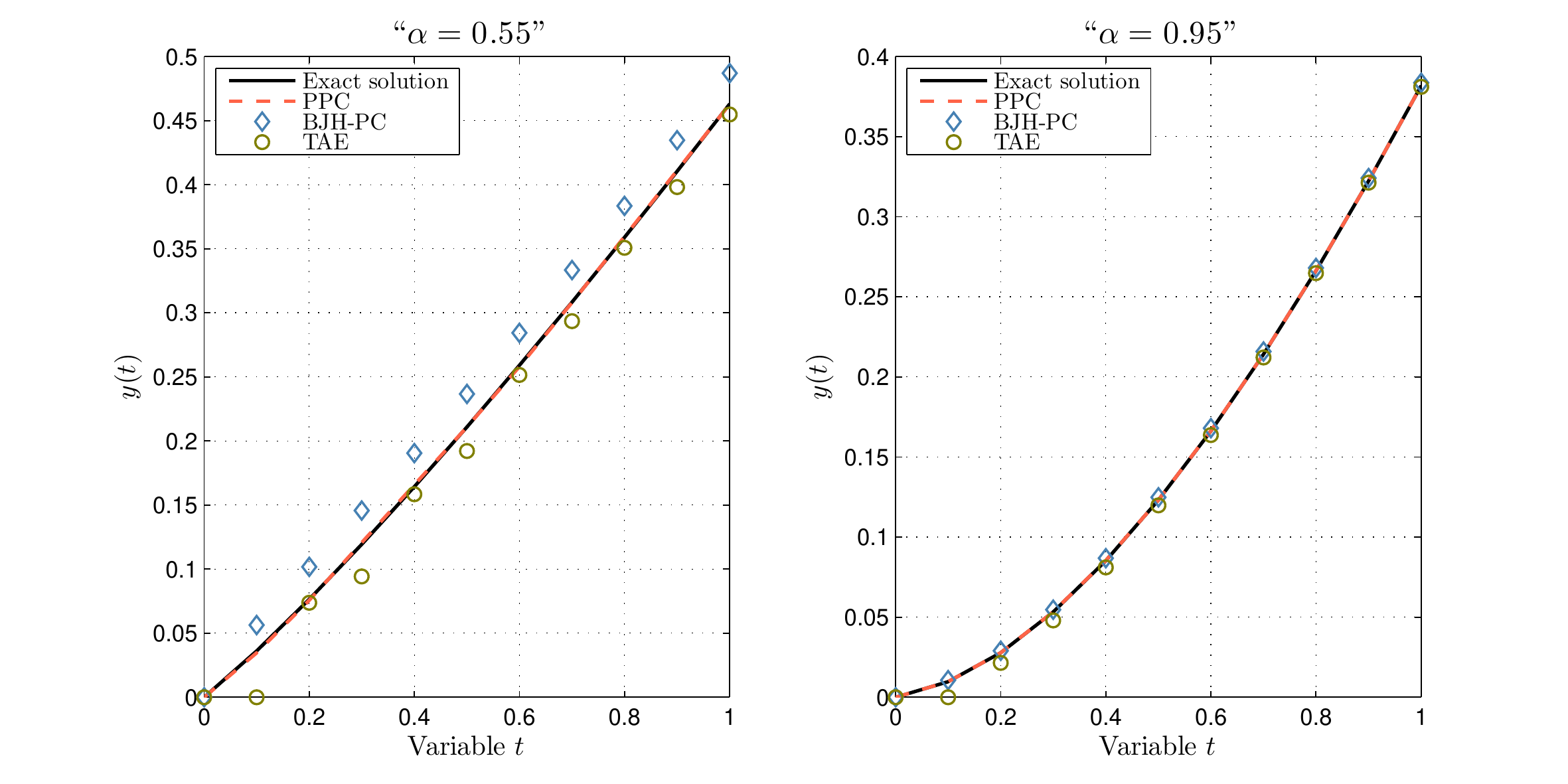}
		\caption{Comparison of the exact and the numerical solutions of problem (\ref{Eq_Example2}) with $N=10$, and $AB(\alpha)=1$.}
		\label{Fig2_Example2}
	\end{figure}
	
\end{example}

\begin{example}\textbf{(Dynamics of a generalized susceptible-infectious model with ABC-fractional derivative)}\\
	In the previous examples, we have considered some differential equations with known exact solutions. Let us now analyze a realistic ABC-fractional epidemic model using qualitative analysis (Lyapunov theory) of solutions and validate the theoretical results numerically by means of the proposed predictor-corrector method. We consider the epidemic phenomenon proposed in \cite{Djebara2022}, which is an extended version of the SI epidemic model with a nonlinear incidence $\varphi$, and we include the fractional derivative which involving Mittag-Leffer kernel. Thus, the novel system is described as follows:
	\begin{equation}
	\label{eq31}
	\begin{cases}
	{}^{ABC}_{\quad \,0}D^{\alpha}_{t}u(t)= \Lambda - \gamma u(t)  \varphi(v(t)) -\mu u(t) ,&t>0, \\
	{}^{ABC}_{\quad \,0}D^{\alpha}_{t}v(t)= \gamma u(t)  \varphi(v(t))-\sigma v(t) ,&t>0,\\
	u(0)=u_{0}>0, \ v(0) = v_{0}>0,
	\end{cases}
	\end{equation}
	where $u(t)$, $v(t)$ represent the numbers of susceptible individuals, and infective individuals at time t, respectively. $\Lambda$ is the birth rate of the population, $\gamma$ is the disease transmission rate, $\mu$ is the natural death rate, and $\sigma:= \widetilde{\sigma}+ \mu$, with  $\widetilde{\sigma}$ is the death rate due to disease, which all are positive real numbers, and $\alpha\in(0,1)$. The incidence function $\varphi(v)$ introduces a nonlinear relation between the susceptible individuals and infective individuals. We suppose that $\varphi\in C^{1}(\mathbb{R}_{+};\mathbb{R}_{+})$, and satisfies:
	\begin{equation}
	\label{eq32}
	\begin{cases}
	\varphi(0)=0 ,\\
	0<v \varphi^{\prime}(v)\leq \varphi(v),& \forall v>0.
	\end{cases}
	\end{equation}
	According to the above condition and Lemma 1 in \cite{abdelmalek2017}, it is classical task to prove the existence of unique solution for system (\ref{eq31}) (see e.g. \cite{Ucar2021,Sintunavarat2022}). As well as, the system (\ref{eq31}) has a disease-free equilibrium $E_0=(u^{0},0)$ with $u^{0}:=\displaystyle\frac{\Lambda}{\mu}$, and a unique endemic equilibrium $E^*:= (u^{*},v^{*})$ if $R_0:=\displaystyle\frac{\Lambda\gamma }{\mu \sigma} \varphi^{\prime}(0)>1$ (cf. \cite{Djebara2022}). In the remainder, the following notation $\mathbb{R}_{+}:=\left[0,+\infty \right) $ will be useful. The feasible region of the suggested model (\ref{eq31}) is given by
	\begin{equation}\label{F_region}
	\Upsilon:=\left\lbrace (x_{1},x_{2}) \in \mathbb{R}^{2}_{+}: x_{1}+x_{2}\leq \frac{\Lambda}{\mu}\right\rbrace .
	\end{equation}
	Now, we investigate the global asymptotic stability of the equilibrium points $E_{0}$ and $E^{*}$, which is determined by the reproduction number $R_{0}$, as a result, the cases $R_{0}\leqslant 1$ and $R_{0}>1$ are treated independently.\\
	Firstly, we provide the global stability result of the disease-free equilibrium $E_0$.
	\begin{theorem}\label{Th1SI_Stability}
		Let $\alpha\in(0,1)$ and $R_{0}\leqslant1$. Assume that (\ref{eq32}) holds. Then, the disease-free equilibrium $E_0$ is globally asympotatically stable in the feasible region $\Upsilon$.
	\end{theorem}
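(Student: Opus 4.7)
The plan is to construct a Volterra-type Lyapunov functional and apply a fractional LaSalle-type invariance argument tailored to the ABC derivative. Concretely, I would take
\begin{equation*}
V(u,v) = \left( u - u^{0} - u^{0}\ln\frac{u}{u^{0}}\right) + v,
\end{equation*}
which is nonnegative on the positive orthant, vanishes precisely at $E_{0}=(u^{0},0)$, and is radially unbounded on $\Upsilon$. The first step is to verify that $\Upsilon$ is positively invariant under (\ref{eq31}) by a standard ABC comparison argument applied to the total population $u+v$, whose ABC derivative equals $\Lambda-\mu u -\sigma v\leq \Lambda-\mu(u+v)$; this gives $u(t)\leq u^{0}$ along any solution starting in $\Upsilon$, which will be crucial later.

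Next I would invoke the generalized Volterra inequality for the ABC derivative (the ABC analogue of the classical Vargas-De-León estimate) that gives, for $u(t)>0$,
\begin{equation*}
{}^{ABC}_{\quad\,0}D^{\alpha}_{t}\!\left(u-u^{0}-u^{0}\ln\tfrac{u}{u^{0}}\right) \leq \left(1-\tfrac{u^{0}}{u}\right){}^{ABC}_{\quad\,0}D^{\alpha}_{t}u(t).
\end{equation*}
Substituting the right-hand sides of (\ref{eq31}) into this bound together with the exact derivative of $v$, using $\Lambda=\mu u^{0}$, and grouping terms I expect to obtain
\begin{equation*}
{}^{ABC}_{\quad\,0}D^{\alpha}_{t}V(t) \leq -\frac{\mu(u-u^{0})^{2}}{u} + \gamma u^{0}\varphi(v) - \sigma v.
\end{equation*}
The hypothesis (\ref{eq32}) implies that $v\mapsto \varphi(v)/v$ is non-increasing on $(0,\infty)$ with limit $\varphi^{\prime}(0)$ at $0$, hence $\varphi(v)\leq v\,\varphi^{\prime}(0)$. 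Plugging this in and factoring $\sigma v$ yields
\begin{equation*}
{}^{ABC}_{\quad\,0}D^{\alpha}_{t}V(t) \leq -\frac{\mu(u-u^{0})^{2}}{u} + \sigma(R_{0}-1)\,v,
\end{equation*}
which is $\leq 0$ whenever $R_{0}\leq 1$.

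From here I would close the argument with a fractional LaSalle principle adapted to the ABC setting: since ${}^{ABC}D^{\alpha}V\leq 0$ along solutions, $V$ is bounded and solutions are precompact in $\Upsilon$, so the $\omega$-limit set is contained in the largest invariant subset of $\{(u,v)\in\Upsilon:{}^{ABC}D^{\alpha}V=0\}$. When $R_{0}<1$ this set is already $\{E_{0}\}$; when $R_{0}=1$ equality forces $u\equiv u^{0}$, and substituting $u\equiv u^{0}$ into the first equation of (\ref{eq31}) forces $\varphi(v)\equiv 0$, hence $v\equiv 0$ by the positivity part of (\ref{eq32}). In either case the $\omega$-limit set reduces to $E_{0}$, giving global asymptotic stability on $\Upsilon$.

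The main obstacle I anticipate is the rigorous justification of the two fractional-calculus facts used above: the Volterra-type ABC inequality for $x-x^{*}-x^{*}\ln(x/x^{*})$, and the LaSalle-type invariance principle for ABC systems. Both are available in the recent literature (e.g., extensions of the Caputo results of Vargas-De-León and of Tuan--Trinh to non-singular kernels), but one must check that the smoothness and boundedness conditions required there are met by our solutions, which follows once positive invariance of $\Upsilon$ and nonnegativity of $u,v$ are established in the first step.
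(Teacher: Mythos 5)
Your argument is correct, but it follows a genuinely different route from the paper's. The paper takes the quadratic-plus-linear Lyapunov function $V(u,v)=\frac{(u-u^{0})^{2}}{2u^{0}}+v$ and the corresponding ABC inequality ${}^{ABC}_{\quad\,0}D^{\alpha}_{t}V\leq\frac{u-u^{0}}{u^{0}}\,{}^{ABC}_{\quad\,0}D^{\alpha}_{t}u+{}^{ABC}_{\quad\,0}D^{\alpha}_{t}v$ (Lemma 3.1 of Taneco-Hern\'andez and Vargas-De-Le\'on), then uses $u\leq u^{0}$ on $\Upsilon$ together with $\varphi(v)\leq\varphi^{\prime}(0)v$ to reach $-\left(\mu+\gamma\varphi(v)\right)\frac{(u-u^{0})^{2}}{u^{0}}-\sigma(1-R_{0})v\leq 0$, and concludes directly from the ABC Lyapunov stability theorem. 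You instead use the Volterra-type function $u-u^{0}-u^{0}\ln(u/u^{0})+v$ with the logarithmic ABC estimate --- which is exactly the tool (Lemma 3.2 of the same reference) that the paper reserves for the endemic equilibrium in Theorem \ref{Th2SI_Stability} --- and your computation of the bound $-\mu\frac{(u-u^{0})^{2}}{u}+\sigma(R_{0}-1)v$ is correct and, unlike the paper's, does not even need $u\leq u^{0}$. The trade-off lies in the closing step: for $R_{0}<1$ your bound is already negative definite with respect to $E_{0}$, so the appeal to a fractional LaSalle principle is unnecessary there; it is only the borderline case $R_{0}=1$ that requires the invariance argument you sketch ($u\equiv u^{0}$ forces $\varphi(v)\equiv 0$, hence $v\equiv 0$ since $\varphi(v)\geq v\varphi^{\prime}(v)>0$). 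That case is the one place where you lean on a tool (an ABC LaSalle theorem) whose availability you rightly flag as the main obstacle; note that the paper's own proof is no more explicit at $R_{0}=1$, where its derivative estimate is likewise only negative semi-definite, so your treatment is, if anything, the more careful of the two on that point.
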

	\begin{proof}
		We begin by rewriting the first equation of the system \eqref{eq31}, with taking into account this relationship $\Lambda=\mu u^{0}$, such that 
		\begin{equation}
		\label{eq34}
		{}^{ABC}_{\quad \,0}D^{\alpha}_{t}u(t)= -\Big(\mu +  \gamma \varphi(v)\Big)(u-u_{0})-\gamma \varphi(v) u_{0} 
		\end{equation}
		We consider the following candidate Lyapunov function:
		\begin{equation} 
		V:  \left\lbrace (u,v) \in \Upsilon : u>0 \right\rbrace  \longrightarrow \mathbb{R}_{+},
		\end{equation}
		such that
		\begin{equation}
		\label{eq35}
		V(u,v)= \frac{(u-u^{0})^{2}}{2u^{0}}+v.
		\end{equation}
		Thanks to the property of ABC-fractional derivative given in Lemma 3.1 from \cite{Hernandez2020}, we get
		\begin{equation}
		\label{eq36}
		\begin{split}
		{}^{ABC}_{\quad \,0}D^{\alpha}_{t}V \leq \frac{(u-u^{0})}{u^{0}} {}^{ABC}_{\quad \,0}D^{\alpha}_{t}u(t) +{}^{ABC}_{\quad \,0}D^{\alpha}_{t} v(t) .
		\end{split}
		\end{equation}
		Then, by employing \eqref{eq31}-\eqref{F_region}, and Lemma 1 \cite{abdelmalek2017}, we obtain
		\begin{align}
		{}^{ABC}_{\quad \,0}D^{\alpha}_{t} V  \leq &  -\left( \mu +\gamma  \varphi(v)\right)   \frac{(u-u^{0})^{2}}{u^{0}} - \gamma u^{0} \varphi(v)  + \gamma u  \varphi(v) - \sigma v, \notag\\
		\leq &  -\left( \mu +\gamma  \varphi(v)\right)   \frac{(u-u^{0})^{2}}{u^{0}} + \gamma \varphi^{\prime}(0)u^{0}v- \sigma v , \notag\\  
		=&-\left( \mu +\gamma  \varphi(v)\right)   \frac{(u-u^{0})^{2}}{u^{0}}+ \left( \frac{\gamma \Lambda  \varphi^{\prime}(0)}{\mu} - \sigma \right) v, \notag \\
		=&-\left( \mu +\gamma  \varphi(v)\right)   \frac{(u-u^{0})^{2}}{u^{0}}-\sigma\left(1- R_0 \right) v.
		\end{align}
		Therefore, $R_{0}\leqslant1$, guarantees that ${}^{ABC}_{\quad \,0}D^{\alpha}_{t} V  \leq0$ for $\alpha\in(0,1)$. Consequently, by the Lyapunov stability theorem (see Theorem 2.7 from \cite{Hernandez2020} and \cite{WeiLyaponuv2022}), we conclude that the disease-free equilibrium $E^0$ is globally asymptotically stable in $\Upsilon$.
	\end{proof}
	
	Secondly, we provide the global stability result of the endemic equilibrium $E^*$. Before do that, we state a lemma which will be useful (see Lemma 2 from \cite{Djebara2022}):
	\begin{lemma} \label{lemma_F_phi}
		Assume that the function $\varphi$ satisfied  \eqref{eq32}  and  
		\begin{equation}\label{eq39}
		\begin{matrix}
		F(x)=x-1-ln(x), &   \forall x>0.
		\end{matrix}
		\end{equation}
		We thus get the following inequality
		\begin{equation}\label{eq40}
		\begin{matrix}
		F\left(\displaystyle \frac{\varphi(v)}{\varphi(v^{*})} \right)  \leq F \left( \displaystyle \frac{v}{v^{*}} \right), & \forall v>0,
		\end{matrix}
		\end{equation}
		where $v^{*}$ is the second component of the equilibrium point $E^{*}$.
	\end{lemma}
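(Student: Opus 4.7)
The plan is to reduce the inequality to a monotonicity comparison between the two ratios $x := v/v^{*}$ and $y := \varphi(v)/\varphi(v^{*})$, using only the shape of $F$ and the two structural facts that follow from (\ref{eq32}).

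First I would record the analytic consequences of (\ref{eq32}). Since $v\varphi^{\prime}(v)>0$ for $v>0$, $\varphi$ is strictly increasing on $(0,\infty)$, and together with $\varphi(0)=0$ this forces $\varphi(v)>0$ for $v>0$. Next, rewriting $v\varphi^{\prime}(v)\leq\varphi(v)$ as $\bigl(\varphi(v)/v\bigr)^{\prime}=\bigl(v\varphi^{\prime}(v)-\varphi(v)\bigr)/v^{2}\leq 0$ shows that $v\mapsto \varphi(v)/v$ is non-increasing on $(0,\infty)$. The other ingredient is the elementary behavior of $F$: $F^{\prime}(x)=1-1/x$ is negative on $(0,1)$ and positive on $(1,\infty)$, so $F$ is strictly decreasing on $(0,1)$, strictly increasing on $(1,\infty)$, and attains its unique minimum $F(1)=0$.

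Next I would split the argument into the three natural cases for the position of $v$ relative to $v^{*}$. If $v=v^{*}$, both arguments of $F$ equal $1$ and the inequality is an equality. If $v>v^{*}$, then $x=v/v^{*}>1$; moreover, $\varphi$ being increasing gives $y>1$, and the non-increasing property of $\varphi(v)/v$ yields $\varphi(v)/v\leq \varphi(v^{*})/v^{*}$, i.e.\ $y\leq x$. Thus $1<y\leq x$, and since $F$ is increasing on $(1,\infty)$, $F(y)\leq F(x)$. If $v<v^{*}$, a symmetric argument gives $\varphi(v)/v\geq \varphi(v^{*})/v^{*}$, hence $y\geq x$; combined with $y<1$ (from $\varphi$ increasing) we get $x\leq y<1$, and since $F$ is decreasing on $(0,1)$, again $F(y)\leq F(x)$.

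I do not anticipate a significant obstacle: the whole argument is driven by the observation that the hypothesis $0<v\varphi^{\prime}(v)\leq\varphi(v)$ packages together precisely the two monotonicities that the cases require, namely monotonicity of $\varphi$ and monotonicity of $\varphi(v)/v$. The only subtle point worth writing carefully is verifying in each case that $y$ lies on the \emph{same side} of $1$ as $x$ and \emph{between} $x$ and $1$, so that the convex/concave behavior of $F$ around its minimum can be exploited. Once that sandwiching is observed, the conclusion follows immediately from the shape of $F$ and no further estimation is needed.
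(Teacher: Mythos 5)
Your proof is correct. Note that the paper itself does not prove this lemma: it simply imports it as Lemma 2 from the reference \cite{Djebara2022}, so there is no in-text argument to compare against. Your self-contained derivation is sound and uses exactly the right two consequences of \eqref{eq32}: that $\varphi'>0$ makes $\varphi$ strictly increasing (so $\varphi(v)/\varphi(v^{*})$ lies on the same side of $1$ as $v/v^{*}$), and that $v\varphi'(v)\leq\varphi(v)$ makes $v\mapsto\varphi(v)/v$ non-increasing (so $\varphi(v)/\varphi(v^{*})$ is sandwiched between $1$ and $v/v^{*}$). Combined with the unimodality of $F$ about its minimum at $1$, the three-case comparison closes the argument. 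For reference, the cited source reaches the same conclusion slightly differently, by writing $F\bigl(\varphi(v)/\varphi(v^{*})\bigr)-F\bigl(v/v^{*}\bigr)$ explicitly and applying the elementary bound $\ln z\leq z-1$ to the logarithmic term, which reduces the claim to the sign of a product of two factors; that route avoids an explicit case split on the monotone branches of $F$, but it relies on the very same sandwiching facts you establish, so the two arguments are essentially equivalent in content. One cosmetic remark: you may wish to state explicitly that $\varphi(v^{*})>0$ (which follows from $v^{*}>0$ and your first observation), so that the ratio $\varphi(v)/\varphi(v^{*})$ is well defined.
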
 
	\begin{theorem}\label{Th2SI_Stability}
		Let $\alpha\in(0,1)$ and $R_{0}>1$. Suppose that (\ref{eq32}) holds. Then, the endemic equilibrium $E^{*}$ is globally asymptotically stable in interior of the feasible region $\Upsilon$.
	\end{theorem}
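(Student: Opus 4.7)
The plan is to mimic the structure of the proof of Theorem \ref{Th1SI_Stability}, but now use a Volterra-type Lyapunov function tailored to the endemic equilibrium. Specifically, I would take
\[
V(u,v) = u^{*} F\!\left(\frac{u}{u^{*}}\right) + v^{*} F\!\left(\frac{v}{v^{*}}\right),
\]
where $F(x)=x-1-\ln x$ is the function introduced in Lemma \ref{lemma_F_phi}. This $V$ is non-negative on the interior of $\Upsilon$ and vanishes only at $E^{*}$, so it is a legitimate Lyapunov candidate.

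The next step is to bound ${}^{ABC}_{\quad \,0}D^{\alpha}_{t}V$ from above. Using Lemma 3.1 of \cite{Hernandez2020} (the same tool employed in Theorem \ref{Th1SI_Stability}) applied to each composition with $F$, I obtain
\[
{}^{ABC}_{\quad \,0}D^{\alpha}_{t}V \leq \left(1-\frac{u^{*}}{u}\right){}^{ABC}_{\quad \,0}D^{\alpha}_{t}u(t) + \left(1-\frac{v^{*}}{v}\right){}^{ABC}_{\quad \,0}D^{\alpha}_{t}v(t).
\]
Then I substitute the right-hand sides of \eqref{eq31} and eliminate $\Lambda$ and $\sigma$ using the endemic equilibrium identities $\Lambda=\mu u^{*}+\gamma u^{*}\varphi(v^{*})$ and $\sigma v^{*}=\gamma u^{*}\varphi(v^{*})$. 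After grouping the $\mu$-terms into the perfect-square form $-\mu(u-u^{*})^{2}/u$ and factoring $\gamma u^{*}\varphi(v^{*})$ out of the remaining terms, the main algebraic step is to rewrite
\[
2-\frac{u^{*}}{u}-\frac{v}{v^{*}}+\frac{\varphi(v)}{\varphi(v^{*})}-\frac{u\,\varphi(v)\,v^{*}}{u^{*}\,\varphi(v^{*})\,v}
\]
in terms of $F$. A direct check (adding and subtracting the appropriate logarithms so they telescope to zero) gives the clean identity
\[
\text{(above)} = F\!\left(\frac{\varphi(v)}{\varphi(v^{*})}\right) - F\!\left(\frac{v}{v^{*}}\right) - F\!\left(\frac{u^{*}}{u}\right) - F\!\left(\frac{u\,\varphi(v)\,v^{*}}{u^{*}\,\varphi(v^{*})\,v}\right).
\]
This gives
\[
{}^{ABC}_{\quad \,0}D^{\alpha}_{t}V \leq -\frac{\mu(u-u^{*})^{2}}{u} + \gamma u^{*}\varphi(v^{*})\left[F\!\left(\tfrac{\varphi(v)}{\varphi(v^{*})}\right)-F\!\left(\tfrac{v}{v^{*}}\right)-F\!\left(\tfrac{u^{*}}{u}\right)-F\!\left(\tfrac{u\varphi(v)v^{*}}{u^{*}\varphi(v^{*})v}\right)\right].
\]

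To conclude, I invoke Lemma \ref{lemma_F_phi} to guarantee $F(\varphi(v)/\varphi(v^{*}))-F(v/v^{*})\leq 0$, and the elementary property $F(x)\geq 0$ for every $x>0$ (with equality only at $x=1$) to discard the other two $F$-terms as non-positive. Hence ${}^{ABC}_{\quad \,0}D^{\alpha}_{t}V\leq 0$ on the interior of $\Upsilon$, with equality forcing $u=u^{*}$, $v=v^{*}$ and $\varphi(v)=\varphi(v^{*})$, i.e.\ the single point $E^{*}$. The Lyapunov stability theorem for ABC-fractional systems (Theorem 2.7 of \cite{Hernandez2020}, cited already in the proof of Theorem \ref{Th1SI_Stability}) then yields global asymptotic stability in the interior of $\Upsilon$.

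The main obstacle is the combinatorial bookkeeping in rewriting the cross-term as a sum of $F$-values; this is where all the logarithms must cancel exactly, and it is the step that makes the choice of weighted-sum Lyapunov function work. Once that identity is in place, the rest is a direct application of Lemma \ref{lemma_F_phi} and standard sign arguments.
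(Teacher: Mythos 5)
Your proposal is correct and follows essentially the same route as the paper: the identical Volterra-type Lyapunov function $L(u,v)=u^{*}F(u/u^{*})+v^{*}F(v/v^{*})$, the same ABC-derivative upper bound, the same regrouping of the cross-terms into a sum of $F$-values (your identity checks out), and the same final appeal to Lemma \ref{lemma_F_phi}, the nonnegativity of $F$, and the fractional Lyapunov theorem. The only nit is the citation: the inequality for the logarithmic (Volterra) part of $L$ is Lemma 3.2 of \cite{Hernandez2020}, not Lemma 3.1 (which the paper uses for the quadratic term in Theorem \ref{Th1SI_Stability}).
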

	\begin{proof}
		The first two equation of system (\ref{eq31}) at $E^*$, clearly give
		\begin{equation}
		\begin{cases}
		\Lambda=\mu u^{*}+\gamma u^{*}\varphi(v^{*}) ,\\
		v^{*} \sigma= \gamma u^{*} \varphi(v^{*}).
		\end{cases}
		\end{equation}
		It follows that
		\begin{equation}
		\label{eq41}
		\begin{cases}
		{}^{ABC}_{\quad \,0}D^{\alpha}_{t} u = \mu u^{*} \Big(1-\displaystyle \frac{u}{u^{*}}\Big)+\gamma u^{*}\varphi(v^{*})\left( 1- \displaystyle\frac{u \varphi(v)}{u^{*} \varphi(v^{*})}\right),   \\
		{}^{ABC}_{\quad \,0}D^{\alpha}_{t} v=\gamma u^{*}\varphi(v^{*})\left(\displaystyle \frac{u \varphi(v)}{u^{*} \varphi(v^{*})}-\displaystyle \frac{v}{v^{*}}\right).
		\end{cases}
		\end{equation}
		We consider the following candidate Lyapunov function:
		\begin{equation} 
		L:  \left\lbrace (u,v) \in \Upsilon : u>0, v>0 \right\rbrace  \longrightarrow \mathbb{R}_{+},
		\end{equation}
		such that
		\begin{equation}
		L(u,v)= u^{*} F \left(\displaystyle \frac{u}{u^{*}} \right) + v^{*} F\left(\displaystyle \frac{v}{v^{*}}\right) .
		\end{equation}
		We now apply the property of ABC-fractional derivative given in Lemma 3.2 from \cite{Hernandez2020}, to obtain
		\begin{equation}
		\label{eq43}
		{}^{ABC}_{\quad \,0}D^{\alpha}_{t} L\leq \left(1-\displaystyle \frac{u^{*}}{u} \right)  {}^{ABC}_{\quad \,0}D^{\alpha}_{t} u + \left( 1-\displaystyle \frac{v^{*}}{v}\right)  {}^{ABC}_{\quad \,0}D^{\alpha}_{t} v
		\end{equation}
		From \eqref{eq41} and by immediately computation, we get
		\begin{align}
		{}^{ABC}_{\quad \,0}D^{\alpha}_{t} L & \leq  \mu u^{*} \left( 1- \displaystyle \frac{u}{u^{*}}\right) \left(1-\displaystyle \frac{u^{*}}{u}\right) +\gamma u^{*}\varphi(v^{*})\left(\displaystyle\frac{u \varphi(v)}{u^{*} \varphi(v^{*})}-\frac{v}{v^{*}}\right)\left(1-\displaystyle \frac{v^{*}}{v}\right)  \notag \\
		& \ \ \ +\gamma u^{*}\varphi(v^{*})\left( 1- \frac{u \varphi(v)}{u^{*} \varphi(v^{*})}\right) \left(1-\displaystyle \frac{u^{*}}{u}\right), \notag \\
		& \leq - \mu u^{*} \left( F \Big(\frac{u}{u^{*}}\Big)+F \left( \frac{u^{*}}{u}\right) \right) -\gamma u^{*}\varphi(v^{*})\left( F \left( \frac{u^{*}}{u}\right)  + F\left(  \frac{ v^{*} u \varphi(v)}{u^{*} v \varphi(v^{*})}\right) \right)  \\ \qquad \qquad \qquad 
		& \ \ \ + \gamma u^{*}\varphi(v^{*})\left( F \left(  \frac{\varphi(v)}{\varphi(v^{*})}\right) - F \left( \frac{v}{v^{*}}\right) \right) .
		\end{align}
		Therefore, the nonnegativity of function $F$ (defined in \eqref{eq39}) on $(0,+\infty)$, and the lemma \ref{lemma_F_phi} ensure that ${}^{ABC}_{\quad \,0}D^{\alpha}_{t} L  \leq0$ for $\alpha\in(0,1)$. Consequently, by the Lyapunov stability theorem (see Theorem 2.7 and Theorem 2.8 \cite{Hernandez2020} as well as \cite{WeiLyaponuv2022}), we get the desired result.
	\end{proof}
	
	Next, we present some numerical simulation of the general model \eqref{eq31} through several sets of parameters (see table \ref{Table_parametersSI}) to examine the effectiveness of the proposed predictor-corrector scheme in the current work (i.e. PPC (\ref{y_corrector})-(\ref{eq216})), as well as the feasibility of the theoretical findings in this example.
	\begin{table}[h!]
		\centering
		\caption{Diverse sets of system's \eqref{eq31} parameter values used in numerical simulations.}
		\begin{tabular}{|l|l|l|l|l|l|}
			\hline\noalign{\smallskip}
			Set of parameters   & \qquad $\Lambda$ & \qquad $\gamma$ & \qquad $\mu$ & \qquad $\widetilde{\sigma}$& \qquad $R_{0} \ (\text{with } \varphi^\prime(0)=1)$\\
			\noalign{\smallskip}
			\hline
			\noalign{\smallskip}
			\hfil Set $1$
			& 
			\qquad $0.3$
			& 
			\qquad $0.1$
			& 
			\qquad $0.44$
			& 
			\qquad $0.38$
			& 
			\qquad $0.0831$
			\parbox[t]{1cm}{\raggedright}\\
			\hfil Set $2$
			& 
			\qquad $0.03$
			& 
			\qquad $0.4$
			& 
			\qquad $0.0365$
			& 
			\qquad $0.0135$
			& 
			\qquad $6.5753$
			\parbox[t]{1cm}{\raggedright}\\
			\hfil Set $3$
			& 
			\qquad $0.3$
			& 
			\qquad $0.1$
			& 
			\qquad $0.32$
			& 
			\qquad $0.5$
			& 
			\qquad $0.1143$
			\parbox[t]{1cm}{\raggedright}\\	
			\hfil Set $4$
			& 
			\qquad $0.03$
			& 
			\qquad $0.4$
			& 
			\qquad $0.032$
			& 
			\qquad $0.018$
			& 
			\qquad $7.5$
			\parbox[t]{1cm}{\raggedright}\\					
			\hline
		\end{tabular}
	\end{table}\label{Table_parametersSI}
	
	The following is a description of the results:
	\begin{itemize}
		\item According to the first set of parameters in table \ref{Table_parametersSI}, with $\varphi(v)=v \ \left(\text{resp. } \varphi(v)=\frac{v}{1+0.01v}\right) $, and $( u_{0},v_{0})=\left( 0.52,\frac{\Lambda}{\mu}-0.52\right) $, the Theorem \ref{Th1SI_Stability} guarantees (for $\alpha\in(0,1)$ and any positive initial data belongs to $\Upsilon$) the global asymptotic stability of the disease-free equilibrium $E_{0}=(u_{0},0)=(0.6818,0)$, which is confirmed by the numerical results depicted in Figure \ref{Ex3_Fig1} (resp. Figure \ref{Ex3_Fig3}) with various values of fractional order $\alpha\in\left\lbrace 0.8,0.85,0.9,0.99 \right\rbrace $.  \\
		\item According to the second set of parameters in table \ref{Table_parametersSI}, with $\varphi(v)=v \ \left(\text{resp. } \varphi(v)=\frac{v}{1+0.01v}\right) $, where $(u_{0},v_{0})=\left( 0.6,\frac{\Lambda}{\mu}-0.6\right) $, the Theorem \ref{Th2SI_Stability} guarantees (for $\alpha\in(0,1)$ and any positive initial data belongs to $\Upsilon$) the global asymptotic stability of the endemic equilibrium $E^{*}=(u^{*},v^{*})=(0.125,0.5087)$ (resp. $E^{*}=(0.1256,0.5083)$), which is confirmed by the numerical results depicted in Figure \ref{Ex3_Fig2} (resp. Figure \ref{Ex3_Fig4}) with various values of fractional order $\alpha\in\left\lbrace 0.8,0.85,0.9,0.99 \right\rbrace $.  \\
		\item According to the third set of parameters in table \ref{Table_parametersSI}, with $\varphi(v)=v \ \left(\text{resp. } \varphi(v)=\frac{v}{1+0.01v}\right) $, and $\alpha=0.99$, the Theorem \ref{Th1SI_Stability} guarantees (for any positive initial data belongs to $\Upsilon$) the global asymptotic stability of the disease-free equilibrium $E_{0}=(u_{0},0)=(0.9375,0)$, which is confirmed by the numerical results depicted in Figure \ref{Ex3_Fig5} (resp. Figure \ref{Ex3_Fig7}) with various values of initial data. \\
		\item According to the fourth set of parameters in table \ref{Table_parametersSI}, with $\varphi(v)=v \ \left(\text{resp. } \varphi(v)=\frac{v}{1+0.01v}\right) $, and $\alpha=0.99$, the Theorem \ref{Th2SI_Stability} guarantees (for any positive initial data belongs to $\Upsilon$) the global asymptotic stability of the endemic equilibrium $E^{*}=(u^{*},v^{*})=(0.125,0.52)$ (resp. $E^{*}=(0.1256,0.0.5196)$), which is confirmed by the numerical results depicted in Figure \ref{Ex3_Fig6} (resp. Figure \ref{Ex3_Fig8}) with various values of initial data. \\
	\end{itemize}
	\begin{figure}[tbp]
		\includegraphics[width=6.5in]{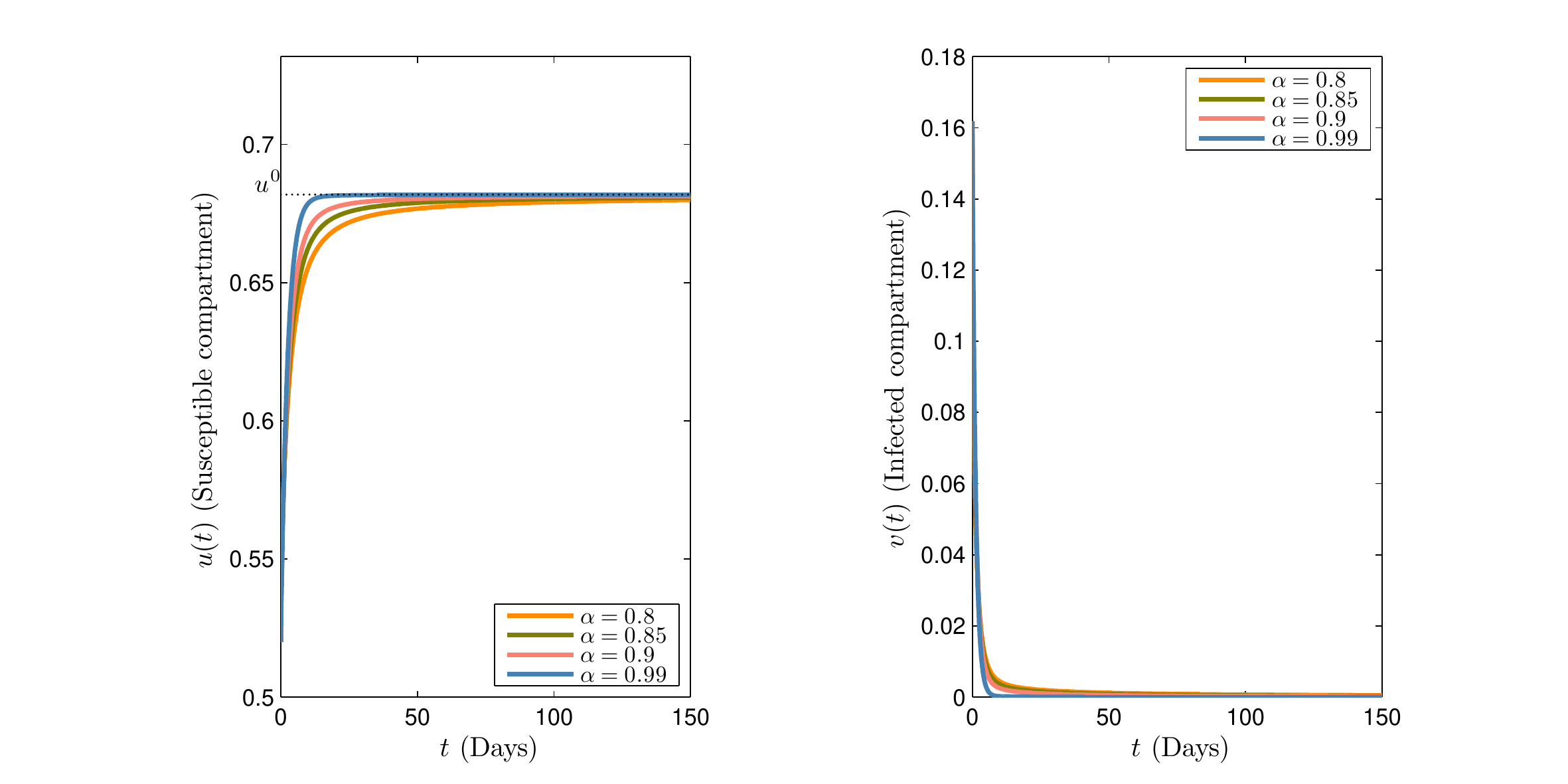}
		\caption{The approximate solution of model \eqref{eq31} by PPC (\ref{y_corrector})-(\ref{eq216}), subject to the first set of parameters in table \ref{Table_parametersSI}, with $\varphi(v)=v$ and $( u_{0},v_{0})=\left( 0.52,\frac{\Lambda}{\mu}-0.52\right) $, considering different values of fractional order $\alpha $.}
		\label{Ex3_Fig1}
	\end{figure}
	
	\begin{figure}[tbp]
		\includegraphics[width=6.5in]{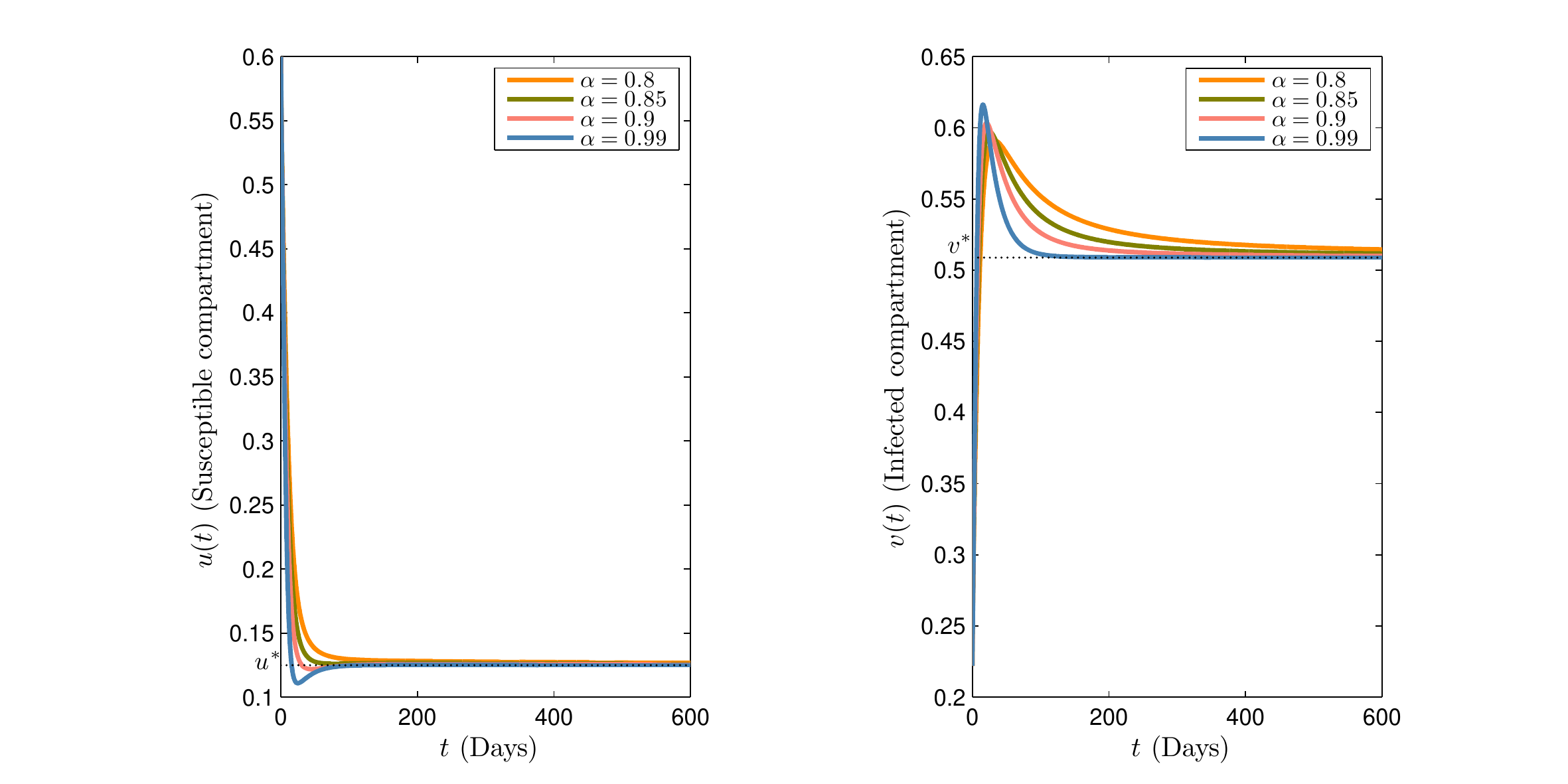}
		\caption{The approximate solution of model \eqref{eq31} by PPC (\ref{y_corrector})-(\ref{eq216}), subject to the second set of parameters in table \ref{Table_parametersSI}, with $\varphi(v)=v$ and $(u_{0},v_{0})=\left( 0.6,\frac{\Lambda}{\mu}-0.6\right) $, considering different values of fractional order $\alpha\in\left\lbrace 0.8,0.85,0.9,0.99 \right\rbrace $.}
		\label{Ex3_Fig2}
	\end{figure}
	
	\begin{figure}[tbp]
		\includegraphics[width=6.5in]{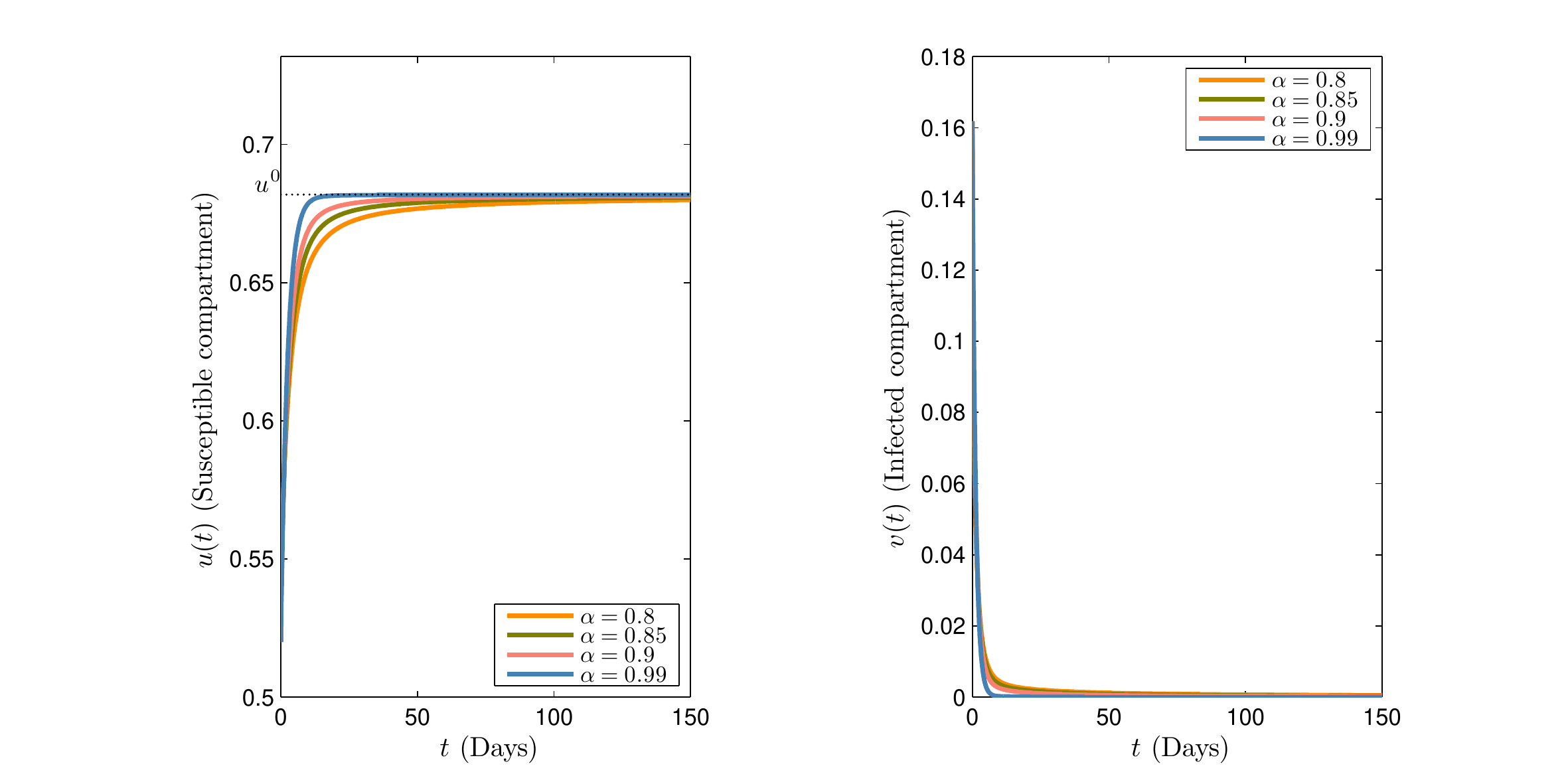}
		\caption{The approximate solution of model \eqref{eq31} by PPC (\ref{y_corrector})-(\ref{eq216}), subject to the first set of parameters in table \ref{Table_parametersSI}, with $\varphi(v)=\displaystyle\frac{v}{1+0.01v}$ and $(u_{0},v_{0})=\left( 0.52,\frac{\Lambda}{\mu}-0.52\right) $, considering different values of fractional order $\alpha\in\left\lbrace 0.8,0.85,0.9,0.99 \right\rbrace $.}
		\label{Ex3_Fig3}
	\end{figure}
	
	\begin{figure}[tbp]
		\includegraphics[width=6.5in]{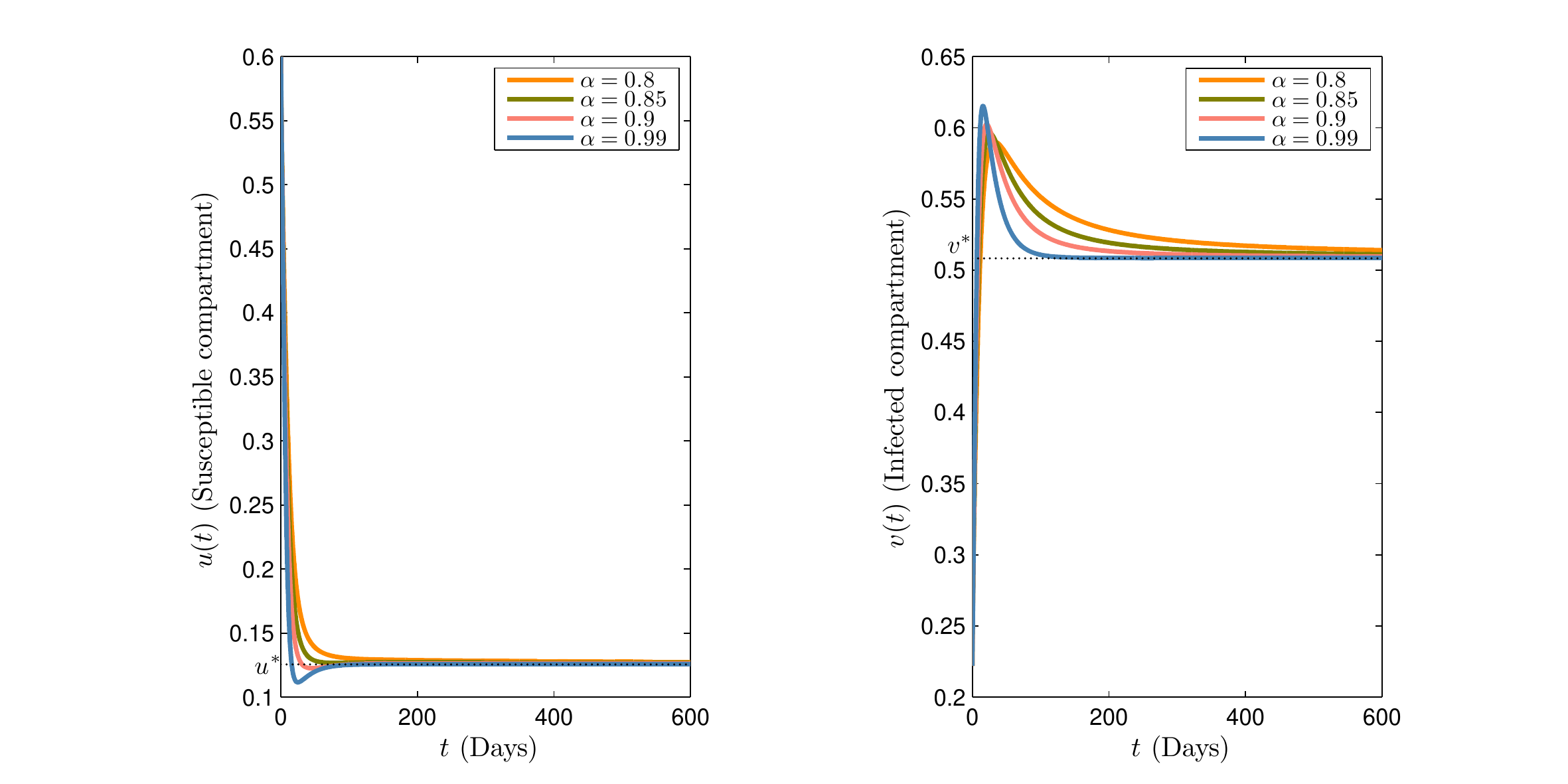}
		\caption{The approximate solution of model \eqref{eq31} by PPC (\ref{y_corrector})-(\ref{eq216}), subject to the second set of parameters in table \ref{Table_parametersSI}, with $\varphi(v)=\displaystyle\frac{v}{1+0.01v}$ and $(u_{0},v_{0})=\left( 0.6,\frac{\Lambda}{\mu}-0.6\right) $, considering different values of fractional order $\alpha\in\left\lbrace 0.8,0.85,0.9,0.99 \right\rbrace $.}
		\label{Ex3_Fig4}
	\end{figure}
	
	\begin{figure}[tbp]
		\includegraphics[width=6.5in]{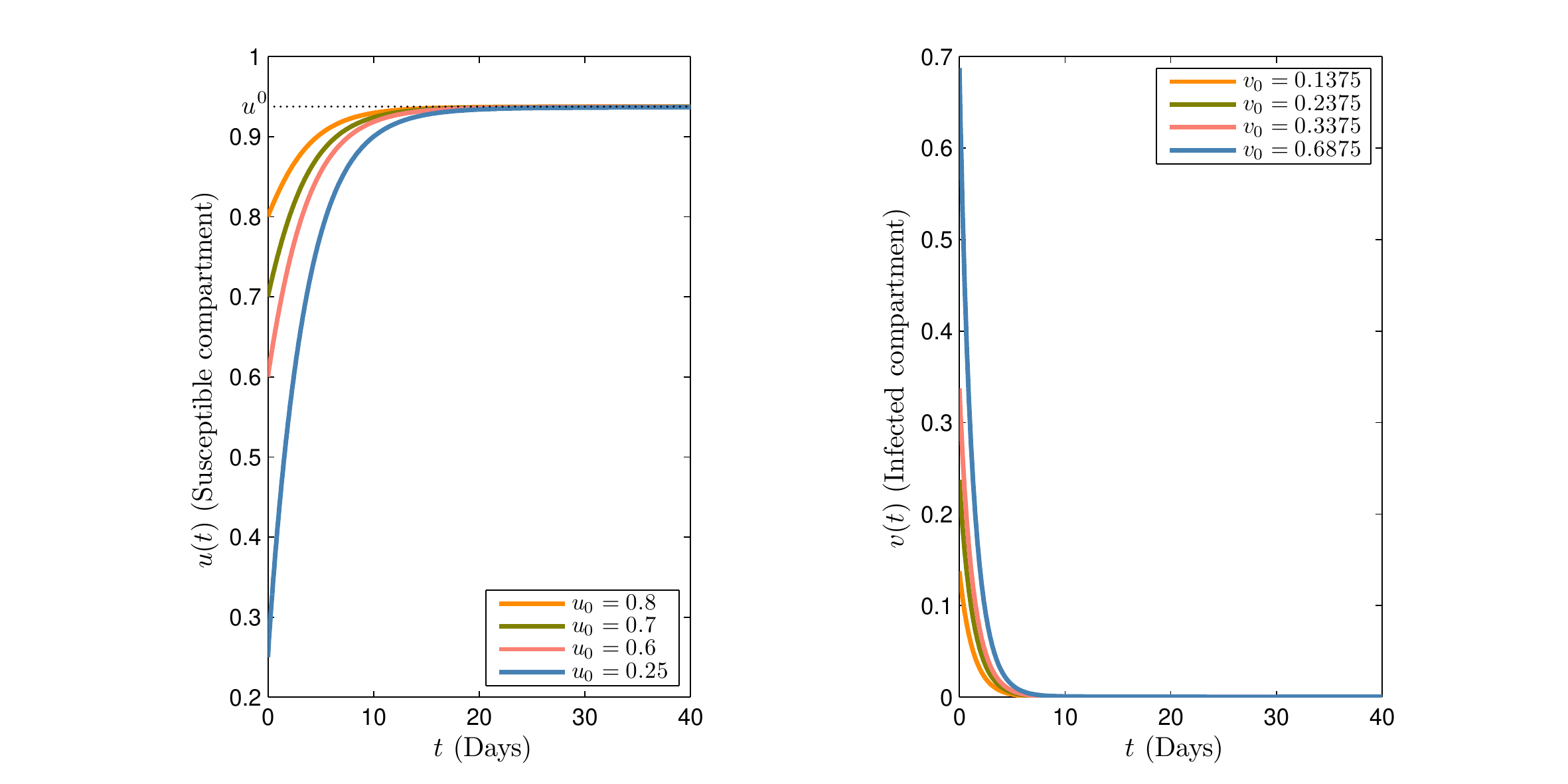}
		\caption{The approximate solution of model \eqref{eq31} by PPC (\ref{y_corrector})-(\ref{eq216}), subject to the third set of parameters in table \ref{Table_parametersSI}, with $\varphi(v)=v$ and $\alpha=0.99$, considering different values of initial data $(u_{0},v_{0})\in\left\lbrace \left( 0.8,\frac{\Lambda}{\mu}-0.8\right) ,\left( 0.7,\frac{\Lambda}{\mu}-0.7\right) ,\left( 0.6,\frac{\Lambda}{\mu}-0.6\right) ,\left( 0.25,\frac{\Lambda}{\mu}-0.25\right)  \right\rbrace $.}
		\label{Ex3_Fig5}
	\end{figure}
	
	\begin{figure}[tbp]
		\includegraphics[width=6.5in]{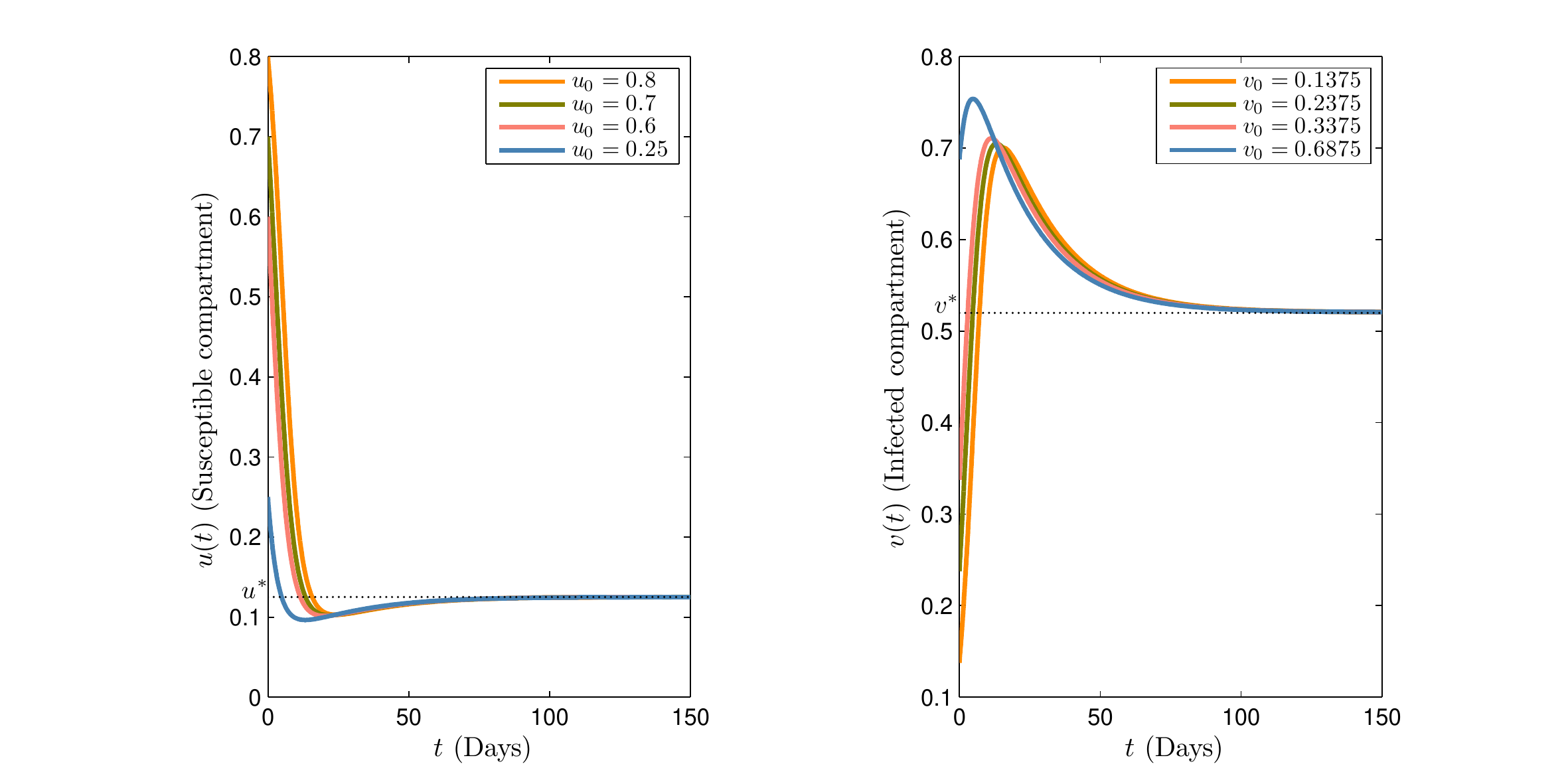}
		\caption{The approximate solution of model \eqref{eq31} by PPC (\ref{y_corrector})-(\ref{eq216}), subject to the fourth set of parameters in table \ref{Table_parametersSI}, with $\varphi(v)=v$ and $\alpha=0.99 $, considering different values of initial data $(u_{0},v_{0})\in\left\lbrace \left( 0.8,\frac{\Lambda}{\mu}-0.8\right) ,\left( 0.7,\frac{\Lambda}{\mu}-0.7\right) ,\left( 0.6,\frac{\Lambda}{\mu}-0.6\right) ,\left( 0.25,\frac{\Lambda}{\mu}-0.25\right)  \right\rbrace $.}
		\label{Ex3_Fig6}
	\end{figure}
	
	\begin{figure}[tbp]
		\includegraphics[width=6.5in]{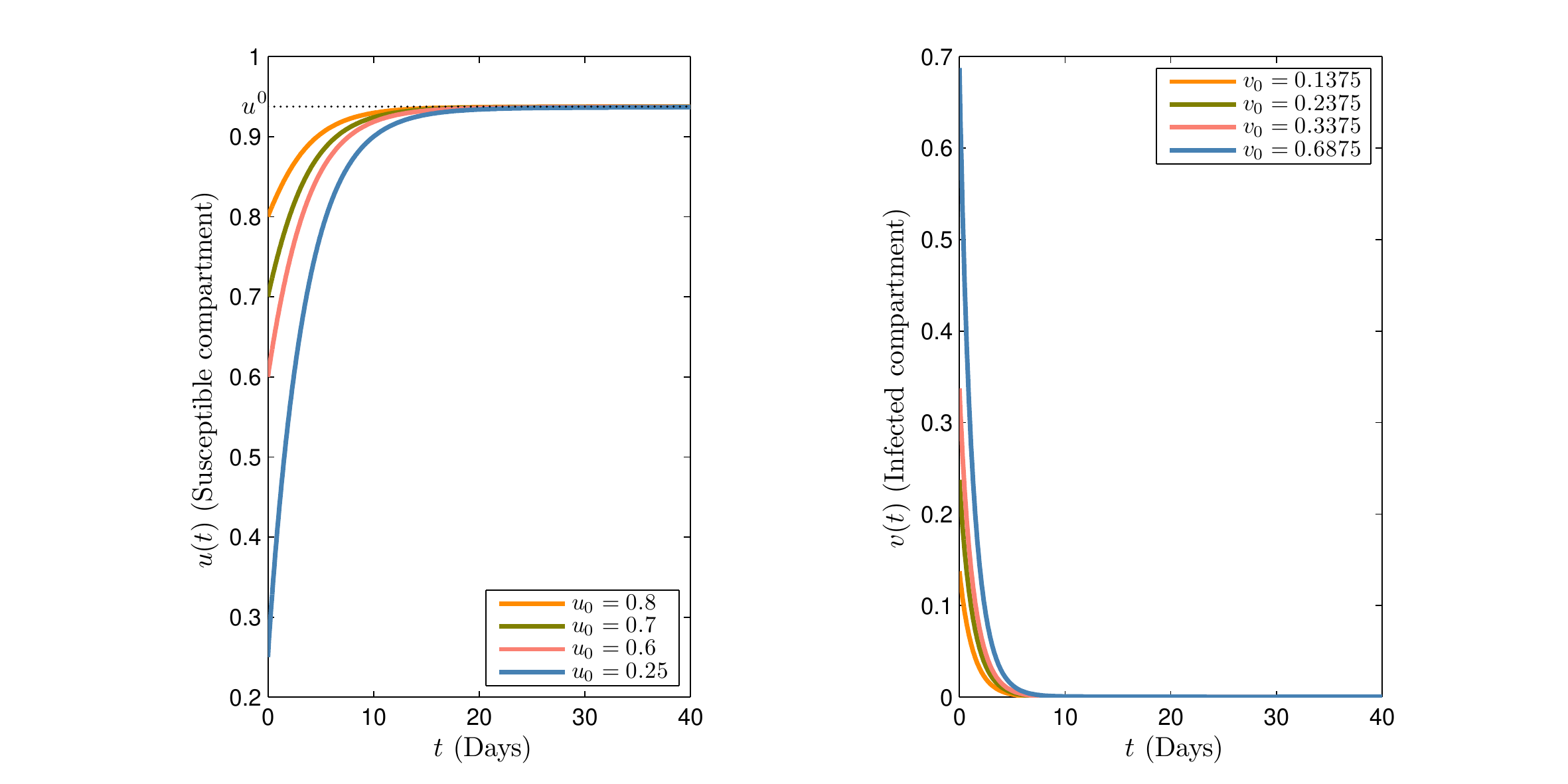}
		\caption{The approximate solution of model \eqref{eq31} by PPC (\ref{y_corrector})-(\ref{eq216}), subject to the third set of parameters in table \ref{Table_parametersSI}, with $\varphi(v)=\displaystyle\frac{v}{1+0.01v}$ and $\alpha=0.99$, considering different values of initial data $(u_{0},v_{0})\in\left\lbrace \left( 0.8,\frac{\Lambda}{\mu}-0.8\right) ,\left( 0.7,\frac{\Lambda}{\mu}-0.7\right) ,\left( 0.6,\frac{\Lambda}{\mu}-0.6\right) ,\left( 0.25,\frac{\Lambda}{\mu}-0.25\right)  \right\rbrace $.}
		\label{Ex3_Fig7}
	\end{figure}
	
	\begin{figure}[tbp]
		\includegraphics[width=6.5in]{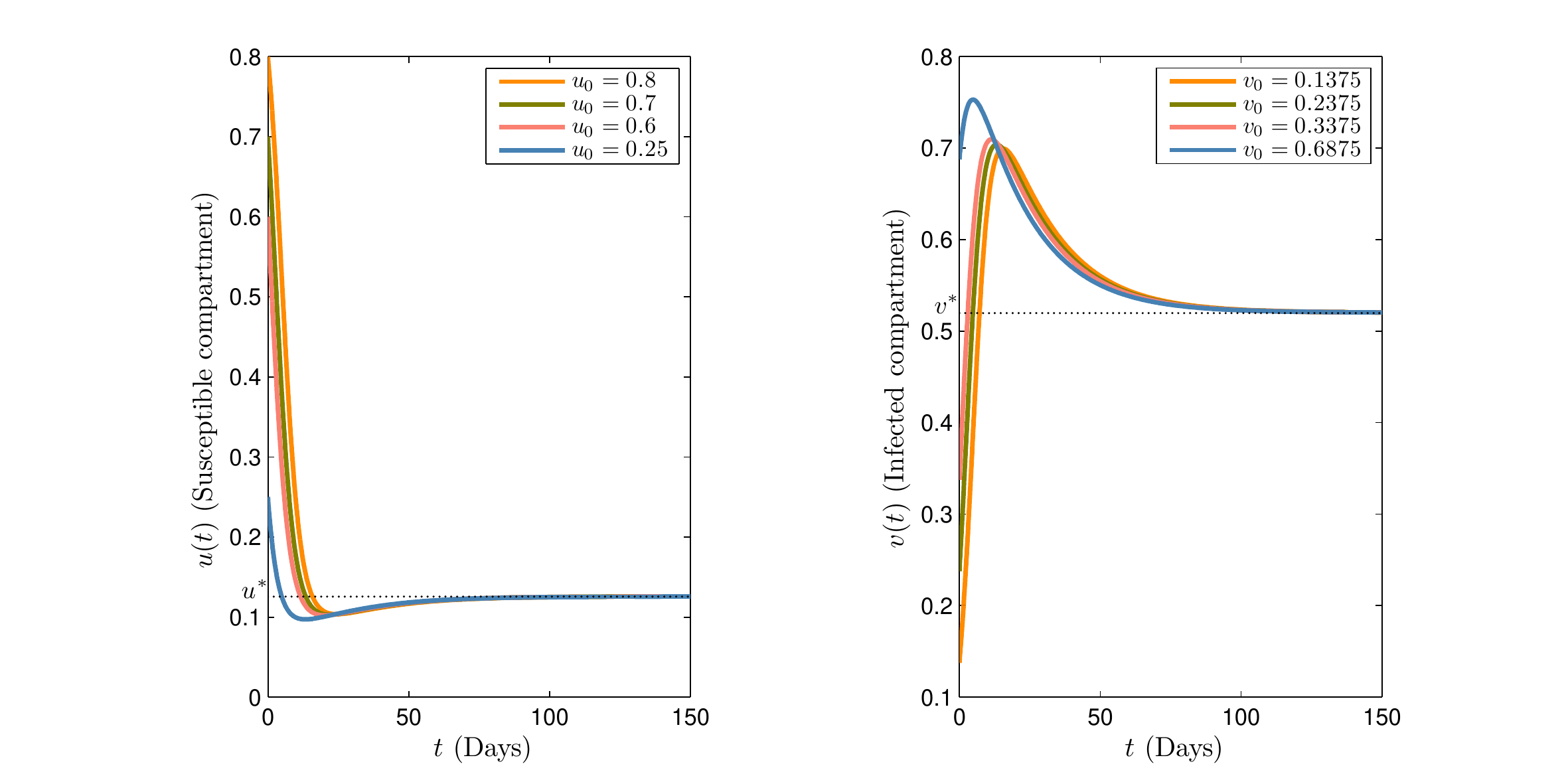}
		\caption{The approximate solution of model \eqref{eq31} by PPC (\ref{y_corrector})-(\ref{eq216}), subject to the fourth set of parameters in table \ref{Table_parametersSI}, with $\varphi(v)=\displaystyle\frac{v}{1+0.01v}$ and $\alpha=0.99 $, considering different values of initial data $(u_{0},v_{0})\in\left\lbrace \left( 0.8,\frac{\Lambda}{\mu}-0.8\right) ,\left( 0.7,\frac{\Lambda}{\mu}-0.7\right) ,\left( 0.6,\frac{\Lambda}{\mu}-0.6\right) ,\left( 0.25,\frac{\Lambda}{\mu}-0.25\right)  \right\rbrace $.}
		\label{Ex3_Fig8}
	\end{figure}
\end{example}

\section*{Acknowledgement}
The authors are grateful to Pr. Bongsoo Jang and Mr. Junseo Lee for sharing the algorithms file related to the methods developed in \cite{Nguyen2017}. The work of S. Aljhani is supported by Taibah University - Saudi Arabia.

\newpage
\section*{Appendix A. Start-up of the Scheme}
To find a desired accuracy for $\widetilde{y}_{1}$ and $\widetilde{y}_{2}$, we find the approximate solutions at points $t_{\frac{1}{4}}$ and $t_{\frac{1}{2}}$ using the constant, linear and quadratic interpolation. Let $I^{0}(f_{a})$ be the constant interpolation of $f$ at point $t=a$ that is  $I^{0}(f_{a}) = f(a)$. Let $I^{1} (f_{a},f_{b})$ and $I^{2}(f_{a},f_{b},f_{c})$ be linear and quadratic interpolation of $f$ with grids $(a,b)$ and $(a,b,c)$ respectively. In the algorithm below we describe how to find the approximate solution of $y(t)$ at points $t_{\frac{1}{4}},t_{\frac{1}{2}},t_{1},$ and $t_{2}$ (i.e. $\widetilde{y}_{\frac{1}{4}},\widetilde{y}_{\frac{1}{2}},\widetilde{y}_{1},$ and $\widetilde{y}_{2}$) using a predictor-corrector scheme: 
\begin{itemize}
	\item 
	Approximate solution of $y(t)$ at point $t_{\frac{1}{4}}$:
	\begin{align*}
	\widetilde{y}^{P}_{\frac{1}{4}} &= y_{0} + AB_{f} \widetilde{f}(t_{\frac{1}{4}},\widetilde{y}_{0}) + AB_{y} \int_{0}^{\frac{1}{4}} (t_{\frac{1}{4}} - s)^{\alpha-1} I^{0}(\widetilde{f}_{0}) ds,   \\
	\widetilde{y}_{\frac{1}{4}} & = y_{0} + AB_{f} \widetilde{f}^{P}_{\frac{1}{4}} + AB_{g} \int_{0}^{t_{\frac{1}{4}}} I^{1}(\widetilde{f}_{0}, \widetilde{f}^{P}_{\frac{1}{4}})(t_{\frac{1}{4}} - s)^{\alpha-1} ds  . 
	\end{align*}
	
	\item 
	Approximate solution of $y(t)$ at point $t_{\frac{1}{2}}$:
	\begin{align*}
	\widetilde{y}^{P_{1}}_{\frac{1}{2}} &= y_{0} + AB_{f}(2\widetilde{f}_{\frac{1}{4}} - \widetilde{f}_{0}) + AB_{g} \int_{0}^{t_{\frac{1}{2}}} I^{0}(\widetilde{f}_{\frac{1}{4}})(t_{\frac{1}{2}} - s)^{\alpha-1} ds,  \\ 
	\widetilde{y}^{P_{2}}_{\frac{1}{2}} &= y_{0} + AB_{f} \widetilde{f}^{P_{1}}_{\frac{1}{2}} + AB_{g} \int_{0}^{t_{\frac{1}{2}}} I^{1}(\widetilde{f}_{\frac{1}{4}} , \widetilde{f}^{P_{1}}_{\frac{1}{2}})(t_{\frac{1}{2}}-s)^{\alpha-1} ds, \\ 
	\widetilde{y}_{\frac{1}{2}} &= y_{0} + AB_{f} \widetilde{f}^{P_{2}}_{\frac{1}{2}} + AB_{g} \int_{0}^{t_{\frac{1}{2}}} I^{2}(\widetilde{f}_{0}, \widetilde{f}_{\frac{1}{4}},\widetilde{f}^{P_{2}}_{\frac{1}{2}})(t_{\frac{1}{2}} - s)^{\alpha-1} ds.
	\end{align*}
	
	\item 
	Approximate solution of $y(t)$ at point $t_{1}$:
	\begin{align*}
	\widetilde{y}^{P_{1}}_{1} &= y_{0} + AB_{f}(6\widetilde{f}_{\frac{1}{2}} - 8\widetilde{f}_{\frac{1}{4}}-3\widetilde{f}_{0}) + AB_{g} \int_{0}^{t_{n}} I^{0}(\widetilde{f}_{\frac{1}{2}})(t_{n} - s)^{\alpha - 1} ds, \\ 
	\widetilde{y}^{P_{2}}_{1} &= y_{0} + AB_{f} \widetilde{f}^{P_{1}}_{1} + AB_{g} \int_{0}^{t_{1}} I^{1}(\widetilde{f}_{\frac{1}{2}}, \widetilde{f}^{P_{1}}_{1}) (t_{1} - s)^{\alpha-1} ds, \\ 
	\widetilde{y}_{1} &= y_{0} + AB_{f} \widetilde{f}^{P_{2}}_{1} + AB_{g} \int_{0}^{t_{1}}I^{2}(\widetilde{f}_{0},\widetilde{f}_{\frac{1}{2}},\widetilde{f}^{P_{2}}_{1})(t_{1}-s)^{\alpha-1} ds.
	\end{align*}
	
	\item 
	Approximate solution of $y(t)$ at point $t_{2}$:
	\begin{align*}
	\widetilde{y}^{P_{1}}_{2} &= y_{0} + AB_{f}(6\widetilde{f}_{1} - 8\widetilde{f}_{\frac{1}{2}}-3\widetilde{f}_{0}) + \int_{0}^{t_{2}} I^{0} (\widetilde{f}_{1})(t_{2}-s)^{\alpha-1} ds, \\ 
	\widetilde{y}^{P_{2}}_{2} &= y_{0} + AB_{f} \widetilde{f}^{P_{1}}_{2} + \int_{0}^{t_{2}} I^{1} (\widetilde{f}_{1},\widetilde{f}^{P_{1}}_{2})(t_{2}-s)^{\alpha-1} ds, \\ 
	\widetilde{y}_{2} &= y_{0} + AB_{f} \widetilde{f}^{P_{2}}_{2} + \int_{0}^{t_{2}} I^{2}(\widetilde{f}_{0},\widetilde{f}_{1},\widetilde{f}^{P_{2}}_{2}) (t_{2}-s)^{\alpha-1} ds.
	\end{align*}
\end{itemize}

where
$$\Biggl\{ AB_{f} = \frac{1-\alpha}{AB(\alpha)} \,\,\,\,\,\textit{and}\,\,\,\,\, AB_{g} = \frac{\alpha}{AB(\alpha)\Gamma(\alpha)} \Biggr\}.$$

\end{document}